\newtheorem{theorem}{Theorem}
\newtheorem{proposition}{Proposition}
\newtheorem{corollary}{Corollary}
\theoremstyle{definition}
\newtheorem{definition}{Definition}
\newtheorem{remark}{Remark}
\newtheorem{example}{Example}
\newcommand{\p}{{\mathbb P}}
\newcommand{\g}{{\mathbb G}}
\newcommand{\an}{{\mathcal O}}
\begin{document}

\title[Surfaces containing a family of plane curves]{Surfaces containing a family of plane curves not forming a fibration}
\author{Jos\'e Carlos Sierra}
\address{Instituto de Ciencias Matem\'aticas (ICMAT), Consejo Superior de Investigaciones Cient\'{\i}ficas (CSIC), Campus de
Cantoblanco, 28049 Madrid, Spain} \email{jcsierra@icmat.es}
\thanks{Research supported by the ``Ram\'on y Cajal" contract RYC-2009-04999 and the project MTM2009-06964 of MICINN}

\begin{abstract}
Extending some previous results of \cite{s-t} and \cite{b-s}, we
complete the classification of smooth surfaces swept out by a
$1$-dimensional family of plane curves that do not form a fibration.
As a consequence, we characterize manifolds swept out by a
$1$-dimensional family of hypersurfaces that do not form a
fibration.
\end{abstract}
\maketitle

\section*{Introduction}

In a classical paper \cite{seg2}, Segre characterized the Veronese
surface, the rational cubic scroll and the cones as the only
surfaces in $\p^N$ containing a $2$-dimensional family of plane
curves when $N\geq 4$. In the late 80's, the influential article
\cite{e-p} on surfaces in $\p^4$ led to the study of surfaces swept
out by a $1$-dimensional family of plane curves. Surfaces in $\p^4$
ruled in conics were classified in \cite{e-s} (see also \cite{b-r})
and \cite{a-d-s}, and surfaces in $\p^4$ fibred by plane curves of
arbitrary degree were classified in \cite{rane}. In that paper it
was suggested to study surfaces in $\p^4$ containing a family of
plane curves not forming a fibration (see \cite[Remark 0.3]{rane}).
The classification of such surfaces has recently been obtained in
\cite{s-t} and \cite{b-s}.

To complete the picture, in this note we focus on surfaces in $\p^N$
containing a $1$-dimensional family of plane curves when $N\geq 5$.
Fibrations can be obtained by taking a divisor on a scroll of planes
over a curve, so these surfaces do not exhibit any special property.
However, the situation is different when the surface is swept out by
a family of plane curves not forming a fibration. This is due to the
fact that families of planes in $\p^N$ such that any two of them
intersect are special if $N\geq 5$, as Morin already noticed in
\cite{morin} (cf. Theorem \ref{thm:morin}). Our results, combined
with those of \cite{s-t} and \cite{b-s} for $N=4$, can be summarized
in the following way:

\begin{theorem}\label{thm:main}
Let $X\subset\p^N$ be a non-degenerate smooth linearly normal
surface containing a $1$-dimensional family of plane curves not
forming a fibration. If $N\geq 4$ then one of the following holds:

\begin{enumerate}
\item[(i)] $X\subset\p^N$ is contained in a rational normal scroll
$S:=S_{0,a,b}\subset\p^{N=a+b+2}$, with $a\in\{0,1\}$, and it is
linked to $N-3$ rulings of $S$ by the complete intersection of $S$
and a hypersurface in $\p^N$;

\item[(ii)] $X\subset\p^5$ is the second symmetric product
of a smooth plane curve, and the embedding is given by the embedding
of the second symmetric product of the plane as the secant variety
of the Veronese surface;

\item[(iii)] $X$ is a ruled surface with invariant $e=-1$ over an elliptic curve,
and it is embedded either in $\p^4$ by $|C_0+\mathcal Lf|$, where
$\deg(\mathcal L)=2$, or in $\p^5$ by $|2C_0+\mathcal Lf|$, where
$\deg(\mathcal L)=1$.
\end{enumerate}
\end{theorem}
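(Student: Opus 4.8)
The plan is to split on $N$. For $N=4$ the statement is precisely the classification obtained in \cite{s-t} and \cite{b-s}, which yields case (i) with $N=4$ — the scrolls $S_{0,0,2}$ and $S_{0,1,1}$ in $\p^4$ — together with the $\p^4$ half of case (iii); so it suffices to treat $N\geq 5$, and I will work throughout with the $1$-dimensional family $\{C_t\}_{t\in T}$ ($T$ an irreducible curve) and the plane $\Pi_t:=\langle C_t\rangle$ spanned by $C_t$. Since the $C_t$ do not form a fibration, two general members meet, hence $\Pi_t\cap\Pi_{t'}\neq\emptyset$ for general $t,t'$; as ``$\Pi_t\cap\Pi_{t'}\neq\emptyset$'' is closed on $T\times T$, \emph{every} two planes of the family meet, which is the hypothesis of Morin's Theorem \ref{thm:morin}.

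Next I would feed this into Theorem \ref{thm:morin}: for $N\geq 5$ the family $\{\Pi_t\}$ must be one of Morin's special systems. Discarding the alternatives that would force $X\subseteq\bigcup_t\Pi_t$ to be degenerate in $\p^N$, I expect exactly two surviving possibilities: either all the $\Pi_t$ pass through a common point, or $N=5$ and the $\Pi_t$ are the planes spanned by the conics of the Veronese surface $V\subset\p^5$ through a variable point of $V$. In the latter case $X$ lies on $\mathrm{Sec}(V)$ and one identifies it with the second symmetric product of a plane curve, which is smooth because $X$ is — this is case (ii). The core of the argument is therefore the former case: there is a point $p\in\Pi_t$ for all $t$. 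Since a smooth cone is a linear space, $X$ is not a cone with vertex $p$; combining this with the fact that $X=\bigcup_t C_t$ is covered by the plane curves $C_t$ rules out $p\in X$.

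I would then project from $p$. The map $\pi_p\colon\p^N\dashrightarrow\p^{N-1}$ is a finite morphism on $X$ onto a non-degenerate surface $X':=\pi_p(X)$; since $p\in\Pi_t$ the image $\pi_p(\Pi_t)$ is a line $\ell_t$ and $\pi_p(C_t)=\ell_t$, so $X'$ is swept out by the $1$-dimensional family of lines $\{\ell_t\}$ — a scroll or a cone over a curve — while $X$ sits inside the cone $Y$ over $X'$ with vertex $p$, and $\pi_p|_X\colon X\to X'$ is finite of degree $d:=\deg C_t$. One checks $d\geq 2$: if $d=1$ the $C_t$ are lines, so $X$ is itself swept out by a $1$-dimensional family of lines, hence a scroll, a cone or a plane, and in each case the family forms a fibration or $X$ is not smooth and non-degenerate.

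The remaining work, which I expect to be the main obstacle, is to pin down $X'$ and the position of $X$ in $Y$. If $X'$ has minimal degree then, being ruled by lines (so not the Veronese), it is a rational normal scroll $S_{a,b}\subset\p^{N-1}$ with $a+b=N-2$, whence $Y=S_{0,a,b}\subset\p^N$ and $X$ is a divisor on $Y$ meeting each ruling plane in the degree-$d$ plane curve $C_t$; analysing this linear system by adjunction on $Y$ and imposing smoothness of $X$ should force $a\in\{0,1\}$, and a liaison/degree computation then realises $X$ together with $N-3$ ruling planes of $S_{0,a,b}$ as $S_{0,a,b}\cap(\text{hypersurface})$ — this is case (i). If instead $X'$ is not of minimal degree — a scroll or a cone over a curve of positive genus, or a non-minimal rational surface — then the constraints that $X$ be smooth and that $\pi_p|_X$ be finite of degree $d$ onto $X'$ become very restrictive, and the expectation is that essentially only the degree-$8$ elliptic scroll $X\subset\p^5$ embedded by $|2C_0+\mathcal L f|$ with $\deg\mathcal L=1$ survives, giving the $\p^5$ half of case (iii). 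Carrying out this last analysis cleanly — eliminating all the positive-genus and non-minimal $X'$ except the elliptic one, and establishing both $a\leq 1$ and the linkage statement in case (i) — is where the real difficulty lies.
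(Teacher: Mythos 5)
There are two genuine gaps, and they are not merely technical.

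First, your case analysis after invoking Morin's Theorem \ref{thm:morin} is incomplete in a way that loses one of the conclusions of the theorem. For $N=5$ Morin gives \emph{three} non-elementary families: the planes of the conics of $v_2(\p^2)$, the tangent planes to $v_2(\p^2)$, and the planes of one ruling of a smooth quadric $Q\subset\p^5$. You keep only (a version of) the Veronese-related families, which yield $v_2(\p^2)$ and the symmetric product $\phi_2(S^2C)$ of case (ii). The quadric family is not an ``alternative to be discarded'': identifying $Q$ with $\g(1,3)$, a curve $\Sigma$ of ruling planes corresponds to a curve $\Gamma\subset\p^3$, and $X$ is the congruence of secant lines to $\Gamma$; smoothness forces $\Gamma$ to be a twisted cubic (giving $v_2(\p^2)$ again) or an elliptic quartic, and the latter is exactly the $\p^5$ half of case (iii). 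You instead try to recover that elliptic scroll from the ``common point'' branch, where it cannot arise: the planes $\Pi_c\subset Q$ of lines through the points $c$ of the quartic meet pairwise in the moving point $[\langle c,c'\rangle]\in X$ and have no common point in $\p^5$. So as written your argument cannot produce case (iii) for $N=5$.

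Second, in the common-point branch your assertion that $p\notin X$ is false, and the error is fatal to the projection argument. If two general planes of $\Sigma$ meet only at $p$, then $\emptyset\neq C_t\cap C_{t'}\subseteq\Pi_t\cap\Pi_{t'}=\{p\}$ forces $p\in C_t$ for every $t$, hence $p\in X$; a surface containing $p$ and covered by plane curves through $p$ is not thereby a cone. This is precisely the configuration that produces case (i) with $a=1$: in the paper $X$ passes through the vertex of $S_{0,1,b}$, $p$ is the base point of the pencil with $C_F^2=1$, and the key work is done on the strict transform $\tilde X\subset\tilde S$ (Theorem \ref{thm:ilic}): deciding whether $\pi_2|_{\tilde X}$ is an isomorphism (then $\tilde X\in|\alpha H+F|$ and the pencil \emph{is} a fibration) or the blow-up at $p$ (then the normal-bundle sequence for $\xi=\pi_2^{-1}(p)$ and the vanishing of the Ext group force $a=1$ and $\tilde X\in|\alpha H-bF|$, whence the linkage to $N-3$ rulings). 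Since $p\in X$, your projection from $p$ is not a morphism on $X$, so the ``finite degree-$d$ cover of a scroll $X'$'' framework does not apply as stated; and the steps you defer (``should force $a\in\{0,1\}$'', the liaison computation) are exactly where this blow-up analysis is needed. The paper's actual decomposition is ``$V_\Sigma$ a cone (vertex a line, giving $S_{0,0,N-2}$, or a point, giving $S_{0,1,N-3}$) versus not a cone (Morin)'', with Ilic's theorem on surfaces in singular rational normal scrolls carrying the cone case.
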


As a consequence of Theorem \ref{thm:main}, we obtain the following:

\begin{corollary}\label{cor:main}
Let $Y\subset\p^N$ be a non-degenerate manifold of dimension $n\geq
3$ containing a $1$-dimensional family of hypersurfaces not forming
a fibration. If $N-n\geq 2$ then $n=3$, $N=5$ and $Y\subset\p^5$ is
linked to a linear $\p^3$ by the complete intersection of a quadric
cone of rank $4$ and a hypersurface in $\p^5$.
\end{corollary}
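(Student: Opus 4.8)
The plan is to reduce to the surface case of Theorem \ref{thm:main} by cutting $Y$ with a general linear subspace of codimension $n-2$, and then to reconstruct $Y$ from the resulting classification together with the geometry of the family of $n$-planes spanned by the hypersurfaces.

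First I would fix a general $\Lambda \cong \p^{N-n+2}\subset\p^N$ and set $X:=Y\cap\Lambda$. By Bertini, $X$ is a smooth non-degenerate surface in $\Lambda$, and since $N-n\geq 2$ we have $X\subset\p^{N-n+2}$ with $N-n+2\geq 4$. If $\{D_t\}$ is the given family of hypersurfaces of $Y$, then each $D_t$ spans a $\p^n$, so $D_t\cap\Lambda$ spans the plane $\langle D_t\rangle\cap\Lambda$, and $\{D_t\cap\Lambda\}$ is a $1$-dimensional family of plane curves on $X$. It does not form a fibration: for general $t\neq s$ the effective divisors $D_t,D_s$ on the smooth variety $Y$ have no common component, so $D_t\cap D_s$ is non-empty of pure dimension $n-2\geq 1$, hence meets the codimension-$(n-2)$ space $\Lambda$, so that $D_t\cap\Lambda$ and $D_s\cap\Lambda$ meet. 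After checking that linear normality descends to $X$ (equivalently, replacing $X$ by its linearly normal model, under which the curves stay plane curves), Theorem \ref{thm:main} applies: $X$ is of type (i), (ii) or (iii).

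Next I would exclude (ii) and (iii). For $n\geq 3$ the surface $X$ is a general hyperplane section of the smooth threefold $W:=Y\cap\p^{N-n+3}$; but the second symmetric product of a plane curve embedded as in (ii), and the elliptic scrolls of (iii), are not general hyperplane sections of smooth threefolds carrying a family of the required kind (this follows from the classification of the threefolds admitting such a surface section, or from a cohomological obstruction to extending the surface to a threefold). Hence $X$ is of type (i): $X$ lies in a scroll $S_{0,a,b}\subset\p^{N-n+2}$ with $a\in\{0,1\}$, linked to $N-n-1$ rulings of $S_{0,a,b}$ by the complete intersection of $S_{0,a,b}$ with a hypersurface.

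Finally I would reconstruct $Y$. The spans $\Pi_t:=\langle D_t\rangle\cong\p^n$ form a $1$-dimensional family of linear subspaces of $\p^N$ with $\dim(\Pi_t\cap\Pi_s)\geq\dim(D_t\cap D_s)=n-2$ for general $t\neq s$; by the structure theory of such families (the higher-dimensional counterpart of Theorem \ref{thm:morin}), their union $S$ is a rational normal scroll of dimension $n+1$, necessarily a cone, with vertex $\p^{k-1}$ where $k\in\{n-1,n\}$. The case $k=n$ is ruled out (it would force $Y$ to be singular, or force $S$ to be a quadric cone of rank $3$), so $k=n-1$: then $D_t\cap D_s$ equals the vertex $V=\p^{n-2}$, in particular $V\subset Y$, and $S$ is the cone over the surface scroll of the previous step with vertex $V$, with $a\in\{0,1\}$. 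Transporting the linkage of $X$ inside $S_{0,a,b}$ to a linkage of $Y$ inside $S\cap(\text{hypersurface of }\p^N)$, one finds that $Y$ is linked to $N-n-1$ rulings $\p^n$ of $S$. The decisive — and most delicate — step is then to use that $Y$ is a smooth manifold to force $a=b=1$ and $n=3$: a local analysis of $S$ and of the residual scheme along the vertex $V$ shows that the residual to the rulings can be smooth only when $n=3$ and only when it is residual to a single ruling, i.e.\ $N-n-1=1$. This yields $S=S_{0,0,1,1}\subset\p^5$, which is the quadric cone of rank $4$ in $\p^5$, so $N=5$, $n=3$, and $Y$ is linked to the linear $\p^3$ (the unique relevant ruling) by the complete intersection of $S$ with a hypersurface, as claimed. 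The reduction and the appeal to Theorem \ref{thm:main} are essentially formal; the main obstacle is this concluding smoothness analysis along $V$, together with the descent of linear normality in the first step.
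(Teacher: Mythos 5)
Your reduction to a surface section $X=Y\cap\Lambda$ and the verification that the sectioned family of plane curves does not form a fibration are sound and agree with the paper's first step. But the two decisive steps of your plan are left as assertions, and in both cases the missing content is exactly the idea the actual proof turns on. To exclude cases (ii)--(iii) of Theorem \ref{thm:main} you appeal to an unspecified ``classification of the threefolds admitting such a surface section, or a cohomological obstruction to extending the surface''; nothing of the sort is proved or cited, and this is where the work lies. The paper's argument is concrete and elementary: for the surfaces in (ii)--(iii) one has $C_b\cdot C_{b'}=1$, hence $D_b\cdot D_{b'}\cdot H=1$, so $D_b\cap D_{b'}$ is a \emph{line} and $Y$ carries a $2$-dimensional family of lines $\mathcal F\subset\g(1,N)$; a general hyperplane $H$ either contains a line of $\mathcal F$ (forcing a line on $X$, impossible for those surfaces) or misses them all, and a duality argument on $\mathcal F^*\subset\g(N-2,N)$ then forces all lines of $\mathcal F$ through a common point, i.e.\ $Y$ is a cone over $X$, contradicting smoothness.

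The second gap is the step that forces $n=3$. Your ``local analysis of $S$ and of the residual scheme along the vertex'' is not carried out, and smoothness of the residual member along the vertex is not by itself what kills $n\geq 4$. The paper's tool is Zak's theorem on tangencies: $Y$ lies in an $(n+1)$-dimensional cone $V$ with vertex $L$ of dimension $l\in\{n-2,n-1\}$; the tangent space $T_vV\cong\p^{n+1}$ at a general $v\in V$ is tangent to $Y$ along the $l$-dimensional $Y_v=\langle L,v\rangle\cap Y$, so $l\leq\dim T_vV-n=1$, which combined with $l\geq n-2$ gives $n=3$ and $l=1$ in one stroke. (In particular your separate exclusion of the vertex of dimension $n-1$ --- ``it would force $Y$ to be singular, or force $S$ to be a quadric cone of rank $3$'' --- is unsubstantiated; that case is exactly what a surface section of type (i) with $a=0$ would produce, and it too is ruled out only by the Zak bound.) The final identification $a=b=1$ then follows not from a smoothness analysis of the residual but from Ilic's Claim 3.6 applied to $\pi_2|_{\tilde Y}:\tilde Y\to Y$, which cannot be an isomorphism because $D_b\cap D_{b'}=L\subset Y$. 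Without a concrete exclusion of (ii)--(iii) and without Zak's theorem (or a genuine substitute), the proposal does not yet constitute a proof.
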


Corollary \ref{cor:main} can be seen as the counterpart to the
problem of classifying manifolds of dimension $n$ in $\p^N$ fibred
by hypersurfaces (or even linear spaces) of dimension $n-1$.
According to the Barth-Larsen theorem, the Picard group of a
manifold is generated by the hyperplane section if $N\leq 2n-2$. On
the other hand, hypersurface fibrations are not special when $N\geq
2n+1$ so this problem makes sense only if $2n-1\leq N\leq 2n$.
Manifolds in $\p^{2n-1}$ fibred by linear spaces of dimension $n-1$,
usually called \emph{scrolls}, were classified in \cite{kl}, and
manifolds in $\p^{2n-1}$ swept out by hypersurfaces (either forming
a fibration or not) have recently been classified in \cite{sierra}.
However, the situation seems to be more complicated for $N=2n$,
where the irregularity of the manifold needs not to be zero. In
fact, the classification of scrolls in $\p^{2n}$ has only been
obtained for $n\leq 4$ (see \cite{i-toma}), and, to the best of the
author's knowledge, hypersurface fibrations in $\p^{2n}$ remain
unexplored for $n\geq 3$.

The paper is organized as follows. In the preliminary section we
study surfaces embedded in singular rational normal scrolls of
planes, and we also recall some facts on families of intersecting
planes that will be crucial in the sequel. In Section
\ref{section:proof} we prove Theorem \ref{thm:main} and Corollary
\ref{cor:main}. Finally, in Section \ref{section:tribute} we point
out some other consequences of Morin's work on families of
intersecting linear spaces.

\section{Preliminaries}

\subsection{Notation and conventions}

We work over the field of complex numbers. We adopt the following notation and conventions:

$\p^N$: projective space of dimension $N$

${\p^N}^*$: dual projective space

$\g(k,N)$: Grassmann variety of $k$-planes in $\p^N$

$a_0,\dots,a_n$: sequence of non-negative integers satisfying $0\leq
a_0\leq\dots\leq a_n$

$S:=S_{a_0,\dots,a_n}$ rational normal scroll of $n$-planes in
$\p^{d_S+n}$, where $d_S:=\sum_{i=0}^n a_i$

$\tilde S:$ projectivization of the vector bundle
$E:=\an_{\p^1}(a_0)\oplus\dots\oplus\an_{\p^1}(a_n)$ over $\p^1$

$X$: integral subvariety of $\p^N$

$\tilde X$: strict transform in $\tilde S$ of $X\subset S$

$T_xX$: embedded tangent space to $X\subset\p^N$ at $x$

$\sim$: rational equivalence of cycles

$\{C_b\}_{b\in B}$: $1$-dimensional family of plane curves on $X$
not forming a fibration

$\Sigma$: $1$-dimensional family of planes in $\g(2,N)$
corresponding to $\{C_b\}_{b\in B}$

$V_{\Sigma}$: union of the planes of $\Sigma$ in $\p^N$



$S^2Z$: second symmetric product of a variety $Z$

$x+y$: point of $S^2Z$ corresponding to the unordered pair of points
$x,y\in Z$

$v_2(\p^k)$: $2$-Veronese manifold in $\p^{k(k+3)/2}$ given by
$|\an_{\p^k}(2)|$


$\phi_k$: map from $S^2\p^k$ to $\p^{k(k+3)/2}$, whose image is the
secant variety of $v_2(\p^k)$

$v_k(\p^2)$: $k$-Veronese surface in $\p^{k(k+3)/2}$ given by
$|\an_{\p^2}(k)|$

\subsection{Singular rational normal scrolls and Roth varieties}

Let $S:=S_{a_0,\dots, a_n}\subset\p^{N=d_S+n}$ be a rational normal
scroll of $n$-planes of degree $d_S:=\sum_{i=0}^n a_i=N-n$. In this
paper we are interested in two particular cases, namely $0=a_0<a_1$
and $0=a_0=a_1<a_2$. In both cases $S\subset\p^N$ is an
$(n+1)$-dimensional cone of vertex $p$ (a point) and $L$ (a line),
respectively. We set some notation that will be
used throughout the paper. Let $\tilde S:=\p(E)$ be the
projectivization of the vector bundle
$E:=\an_{\p^1}(a_0)\oplus\dots\oplus\an_{\p^1}(a_n)$ over $\p^1$. So
$\tilde S$ is a desingularization of $S$ with natural projections
\[
\xymatrix{\tilde S \ar[d]_{\pi_1} \ar[r]^<<<<<{\pi_2} & S\subset\p^N\\
\p^1}
\]
Let $F$ denote a fibre of $\pi_1$, and let $H$ denote the pullback
by $\pi_2$ of a hyperplane section of $S\subset\p^N$. These two
divisors generate the Chow ring $A(\tilde S)$ of $\tilde S$, with
relations $F^2\sim 0$ and $H^{n+1}\sim d_SH^n\cdot F$. For any
effective Weil divisor $X$ on $S$, let $\tilde X$ denote the strict
transform of $X$ on $\tilde S$. So $\tilde X \sim \alpha H+\beta F$
for some integers $\alpha,\beta$.

Manifolds $X$ of dimension $n\geq 2$ contained in
$S_{0,0,a_2,\dots,a_n}$ such that $L\subset X$ were widely studied
in \cite{ilic}, extending to higher dimensions the study of surfaces
contained in rational normal scrolls of planes initiated in Roth's
classical paper \cite[\S3.5]{roth}, and they were called \emph{Roth
varieties} (see \cite[Definition 3.1]{ilic}). Most of the results
obtained in Ilic's paper also hold for manifolds $X$ of dimension
$n\geq 2$ contained in $S_{0,a_1,a_2,\dots,a_n}$ such that $p\in X$
(see \cite[Remark 5.7]{ilic}). However, there is one case that does
not seem to be considered there. More precisely, for $n=2$ it might
happen that $\pi_2|_{\tilde X}:\tilde X\to X$ is the blowing-up of
$X$ at $p$ instead of an isomorphism (as announced in \cite[Remark
5.7]{ilic} concerning Theorem 3.7). So we state \cite[Remark
5.7]{ilic} in the following way:

\begin{theorem}\label{thm:ilic}
Let $X\subset\p^N$ be a manifold of dimension $n$ contained in a
rational normal scroll $S:=S_{0,a_1,\dots,a_n}$ with $a_i\geq 1$ and
$\sum_{i=0}^n a_i=N-n$ such that $p\in X$. Then either
$\pi_2|_{\tilde X}:\tilde X\to X$ is an isomorphism and $\tilde
X\in|\alpha H+F|$, or $n=2$, $\pi_2|_{\tilde X}:\tilde X\to X$ is
the blowing-up of $X$ at $p$, $S=S_{0,1,b}\subset\p^{N=b+3}$,
$\tilde X\in|\alpha H-bF|$ and $X\subset\p^N$ is linked to $N-3$
rulings of $S$ by the complete intersection of $S$ and a
hypersurface of degree $\alpha$ in $\p^N$. Furthermore, in both
cases, such a manifold exists for every integer $\alpha\geq 1$.
\end{theorem}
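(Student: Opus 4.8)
The plan is to follow the study of Roth varieties in \cite{ilic}, carried out there for $X\subset S_{0,0,a_2,\dots,a_n}$ with $L\subset X$, adapting it to the case in which the vertex is the point $p\in X$, and to pin down the situation---apparently not addressed in \cite[Remark 5.7]{ilic}---in which $\pi_2|_{\tilde X}$ is a blow-up rather than an isomorphism. Write $\tilde X\sim\alpha H+\beta F$ in $A(\tilde S)$; the case $\alpha=0$ corresponds to $X$ being a ruling of $S$, so we may assume $\alpha\geq1$, and then $\pi_1|_{\tilde X}\colon\tilde X\to\p^1$ is surjective with general fibre a hypersurface of degree $\alpha$ in a ruling $F\cong\p^n$. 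Let $\sigma_0\subset\tilde S$ be the section of $\pi_1$ associated with the quotient $E\twoheadrightarrow\an_{\p^1}(a_0)=\an_{\p^1}$; it is the locus contracted by $\pi_2$ to $p$, so that $\pi_2^{-1}(p)=\sigma_0$ set-theoretically, $H\cdot\sigma_0=0$, $F\cdot\sigma_0=1$ and hence $\tilde X\cdot\sigma_0=\beta$ whenever $\sigma_0\not\subset\tilde X$. A local coordinate computation on $\tilde S$ shows moreover that $N_{\sigma_0/\tilde S}=\bigoplus_{i=1}^n\an_{\p^1}(-a_i)$ and that $\ker (d\pi_2)_q=T_q\sigma_0$ for every $q\in\sigma_0$.

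Since $p\in X$ we have $\tilde X\cap\sigma_0\neq\emptyset$, and we split into two cases. If $\tilde X\cap\sigma_0$ is finite, then $\pi_2|_{\tilde X}$ is quasi-finite and proper, hence finite, and hence---$X$ being normal---an isomorphism by Zariski's Main Theorem; in particular $\tilde X\cong X$ is smooth and $\tilde X\cap\sigma_0$ reduces to the single point $q=(\pi_2|_{\tilde X})^{-1}(p)$. Since $d(\pi_2|_{\tilde X})_q$ is injective while $\ker (d\pi_2)_q=T_q\sigma_0$, we get $T_q\tilde X\cap T_q\sigma_0=0$ inside $T_q\tilde S$; as $\dim T_q\tilde X+\dim T_q\sigma_0=n+1$, the intersection of $\tilde X$ and $\sigma_0$ at $q$ is transverse and $\beta=\tilde X\cdot\sigma_0=1$, which is the first alternative.

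Otherwise $\sigma_0\subset\tilde X$, so $\pi_2|_{\tilde X}\colon\tilde X\to X$ contracts $\sigma_0\cong\p^1$ to $p$. If $n\geq3$ then $\sigma_0$ has codimension $\geq2$ in $\tilde X$; since $\Sing(\tilde X)\subset\sigma_0$, the divisor $\tilde X\subset\tilde S$ is then regular in codimension $1$, and being Cartier in the smooth $\tilde S$ it is also $S_2$, hence normal, so the exceptional locus of the proper birational morphism $\pi_2|_{\tilde X}$ onto the smooth $X$ would be pure of codimension $1$, contradicting $\cod_{\tilde X}\sigma_0\geq2$. Thus $n=2$. For $n=2$ a local analysis along $\sigma_0$---precisely the point overlooked in \cite[Remark 5.7]{ilic}---shows that $\tilde X$ is smooth there, for otherwise $X$ would be singular at $p$. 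Then $\sigma_0$ is a smooth rational curve on the smooth surface $\tilde X$ with $\sigma_0^2=:-e$; from the exact sequence $0\to N_{\sigma_0/\tilde X}\to N_{\sigma_0/\tilde S}=\an_{\p^1}(-a_1)\oplus\an_{\p^1}(-a_2)\to\an_{\tilde S}(\tilde X)|_{\sigma_0}\to0$ on $\sigma_0$, the nonzero map $\an_{\p^1}(-e)\to\an_{\p^1}(-a_1)\oplus\an_{\p^1}(-a_2)$ forces $e\geq a_1\geq1$, while $X$ being smooth at $p$ forces $e=1$ by Castelnuovo's contractibility criterion; hence $a_1=1$ and $\pi_2|_{\tilde X}$ is the blow-up of $X$ at $p$. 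Setting $b:=a_2$ we obtain $S=S_{0,1,b}\subset\p^{N=b+3}$, and comparing degrees in the sequence gives $\beta=\tilde X\cdot\sigma_0=e-a_1-a_2=-b$, so $\tilde X\in|\alpha H-bF|$. Finally $\tilde X+bF\sim\alpha H$, and pushing forward by $\pi_2$ exhibits $X$ together with $b=N-3$ rulings of $S$ as the complete intersection of $S$ with a hypersurface of degree $\alpha$ in $\p^N$, which is the second alternative.

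For the existence statement one constructs, for each $\alpha\geq1$, a manifold of each type: in the first case, a general element of the base-point-free linear system $|\alpha H+F|$ on $\tilde S$, which is smooth by Bertini, meets $\sigma_0$ transversally in a single point and is therefore carried isomorphically onto its image by $\pi_2$; in the second case, since $\sigma_0$ lies in the base locus of $|\alpha H-bF|$ on $\tilde S=\p(\an_{\p^1}\oplus\an_{\p^1}(1)\oplus\an_{\p^1}(b))$, Bertini no longer suffices and one checks directly that a general member is smooth, contains $\sigma_0$ as a $(-1)$-curve, and is mapped by $\pi_2$ onto a smooth surface $X$ with $p\in X$. I expect the real obstacle to be exactly this $n=2$ case with $\sigma_0\subset\tilde X$: both establishing the blow-up structure (the smoothness of $\tilde X$ along $\sigma_0$ and the identification of $\sigma_0$ as a $(-1)$-curve) and producing the examples, since this is the case---missing from \cite[Remark 5.7]{ilic}---where the resolution $\pi_2\colon\tilde S\to S$ fails to restrict to an isomorphism on $X$.
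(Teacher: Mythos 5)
Your classification half is essentially the paper's argument and is correct: you split according to whether $\sigma_0=\pi_2^{-1}(p)$ meets $\tilde X$ in finitely many points (giving $\beta=\tilde X\cdot\sigma_0=1$ by transversality) or is contained in $\tilde X$, you rule out $n\geq 3$ by a purity/normality argument equivalent to the paper's ``small contraction'' remark, and you use the same normal bundle sequence $0\to N_{\sigma_0/\tilde X}\to N_{\sigma_0/\tilde S}\to(N_{\tilde X/\tilde S})|_{\sigma_0}\to 0$ on the $(-1)$-curve. Your route to $a_1=1$ (a nonzero map $\an_{\p^1}(-1)\to\an_{\p^1}(-a_1)\oplus\an_{\p^1}(-a_2)$ forces $1\geq a_1$) is a legitimate, slightly more elementary variant of the paper's argument, which instead observes that $\operatorname{Ext}^1(\an_{\p^1}(\beta),\an_{\p^1}(-1))=H^1(\p^1,\an_{\p^1}(-1-\beta))=0$, so the sequence splits and $a=1$, $b=-\beta$ can be read off directly.

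The genuine gap is in the existence statement for the second case. For $\tilde X\in|\alpha H-bF|$ on $\tilde S=\p(\an_{\p^1}\oplus\an_{\p^1}(1)\oplus\an_{\p^1}(b))$ the curve $\xi=\sigma_0$ lies in the base locus of the linear system, so Bertini gives you nothing along $\xi$, and ``one checks directly that a general member is smooth, contains $\sigma_0$ as a $(-1)$-curve, and maps onto a smooth surface'' is precisely the assertion to be proved, not a proof; you yourself flag this as the real obstacle but then leave it unresolved. The paper closes this by a concrete construction: the plane $\p^2\subset\p^N$ corresponding to $S_{0,1}\subset S_{0,1,b}$ has strict transform $\tilde\p^2\in|H-bF|$, which settles $\alpha=1$; for $\alpha\geq 2$ one blows up $\tilde S$ along $\xi$, notes that on each fibre $f$ of the exceptional $\p^1$-bundle the strict transforms $Bl(\tilde X)\sim Bl(\tilde\p^2)+(\alpha-1)Bl(H)$ cut out the complete system $\an_f(1)$ (so the system $|Bl(\tilde X)|$ is base-point-free), applies Bertini upstairs to get a smooth member $V$, checks irreducibility (a disconnected or reducible smooth member would force a singular intersection), and uses $Bl(\tilde X)\cdot f=1$ to see that $\sigma|_V$ is an isomorphism onto an element of $|\alpha H-bF|$ with the required properties. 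Some version of this (or an explicit equations argument) is needed; without it the ``such a manifold exists for every $\alpha\geq 1$'' clause is unproved in exactly the case that motivates the theorem. The same blow-up computation also supplies the smoothness of $\tilde X$ along $\sigma_0$ that you invoke but do not establish when identifying $\sigma_0$ as a $(-1)$-curve.
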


\begin{proof}
Let $\xi:=\pi_2^{-1}(p)\subset\tilde S$. If $\pi_2|_{\tilde X}:\tilde X\to X$ is an isomorphism then $\tilde X\cdot\xi=1$, so we get $1=\tilde X\cdot\xi=(\alpha H+\beta F)\cdot\xi=\beta$ and therefore
$\tilde X\in|\alpha H+F|$, as claimed in \cite[Remark 5.7]{ilic}. Otherwise,
$\pi_2|_{\tilde X}:\tilde X\to X$ contracts the curve
$\xi$ by Zariski's main theorem. Hence
we deduce that $n=2$ (if $n\geq 3$ then $\pi_2|_{\tilde X}$ would be
a small contraction and $X$ would be singular at $p$). This happens
if and only if $\xi\cong\p^1$ is a $(-1)$-curve on $\tilde X$, i.e.
if and only if $\xi^2=-1$. The exact sequence of normal bundles
\[
0\to N_{\xi/\tilde X}\to N_{\xi/\tilde S}\to (N_{\tilde X/\tilde
S})|_{\xi}\to 0
\]
turns out to be
\[
0\to\an_{\p^1}(-1)\to\an_{\p^1}(-a)\oplus\an_{\p^1}(-b)\to\an_{\p^1}(\beta)\to
0,
\]
as $\xi\sim H^2-(a+b)H\cdot F$ and $(N_{\tilde X/\tilde
S})|_{\xi}=\tilde X\cdot\xi=(\alpha H+\beta F)\cdot(H^2-(a+b)H\cdot
F)=\beta$. Therefore, $\beta=1-a-b$ and
$\an_{\p^1}(-a)\oplus\an_{\p^1}(-b)\in\text{Ext}^1(\an_{\p^1}(\beta),\an_{\p^1}(-1))=H^1(\p^1,\an_{\p^1}(-1-\beta))=0$,
so $a=1$ and $b=-\beta$, whence $\tilde X\in|\alpha H-bF|$ and $X$
is linked to $b=N-3$ rulings of $S$ by the complete intersection of
$S$ and a hypersurface of degree $\alpha$ in $\p^N$.

Let us show now the existence of $X$ for every integer $\alpha\geq
1$. Since $H+F$ is very ample on $\tilde S$ we deduce that $\alpha
H+F$ is also very ample for any $\alpha\geq 1$, whence a general
$\tilde X\in|\alpha H+F|$ is smooth by Bertini's theorem (and hence
irreducible). Assume now $n=2$, $a=1$ and $\tilde X\in|\alpha
H-bF|$. Since $\xi\subset\tilde X$, we cannot directly deduce from
Bertini's theorem that a general $\tilde X\in|\alpha H-bF|$ is
smooth. However, we can argue as in \cite[pp.~60--61]{harris}. Let
$\p^2\subset\p^N$ be the plane corresponding to the embedding
$S_{0,1}\subset S_{0,1,b}$ (which is unique if $b>1$). Hence
$\tilde\p^2\in|H-bF|$ and
$\tilde\p^2\cong\p(\an_{\p^1}\oplus\an_{\p^1}(-1))$, so in
particular the statement for $\alpha=1$ is proved. Let
$\sigma:Bl_{\xi}(\tilde S)\to\tilde S$ be the blowing-up of $\tilde
S$ along $\xi$, and let $Bl(\tilde X)$ denote the strict transform
of $\tilde X$ on $Bl_{\xi}(\tilde S)$. Note that
$\sigma|_{\sigma^{-1}(\xi)}:\sigma^{-1}(\xi)\to\xi$ is a
$\p^1$-bundle over $\xi$, whose fibre is denoted by $f$, and that
$Bl(\tilde\p^2)\cdot f=1$. If $\alpha\geq 2$, we remark that
$\dim(|\alpha H-bF|)\geq 1$ and that $Bl(\tilde X)$ cuts out the
complete linear system $\an_{f}(1)$, and so it has no base points on
$\sigma^{-1}(\xi)$. It follows that $|Bl(\tilde X)|$ is a
base-point-free linear system of positive dimension, whence its
general element $V$ is smooth by Bertini's theorem. If $V$ would be
reducible, say $V=V'+V''$, then $V'\cap V''\neq\emptyset$ and $V$
would be singular. Therefore $V$ is irreducible. Furthermore,
$$Bl(\tilde X)\cdot f = (Bl(\tilde\p^2)+(\alpha-1)Bl(H))\cdot
f=1,$$ as $\tilde X\sim\tilde\p^2+(\alpha-1)H$ and $H\cdot\xi=0$. So
$V\cdot f=1$ and therefore $\sigma|_V:V\to\sigma|_V(V)$ is an
isomorphism. In particular $\sigma|_V(V)\in|\alpha H-bF|$, and hence
$X:=\pi_2(\sigma|_V(V))$ is also smooth and irreducible.
\end{proof}

\begin{remark}\label{rem:tangente}
In Theorem \ref{thm:ilic}, there is a more geometric way to see that
$a=1$ when $\pi_2|_{\tilde X}:\tilde X\to X$ is the blowing-up of
$X$ at $p$. Note that the embedded tangent lines at $p$ to the plane
curves on $X$ sweep out the embedded tangent plane $T_{p}X$, so we
deduce that $T_{p}X\subset S$ and that $T_{p}X$ is not a ruling of
$S$. Therefore $a=1$, since for $a\geq 2$ the only planes contained
in $S$ are the rulings.
\end{remark}

\begin{remark}
Surfaces $X\subset\p^N$ contained in singular rational normal
scrolls of planes whose vertex is a point $p$ were studied by Roth
in \cite[\S 3.5]{roth}. In \cite[p.~156]{roth}, it was stated that
surfaces containing $p$ are linked to $N-3$ planes by the complete
intersection of the scroll and a hypersurface in $\p^N$. We would
like to remark that this statement is correct when $\pi_2|_{\tilde
X}:\tilde X\to X$ is the blowing-up of $X$ at $p$, i.e. when $\tilde
X\in|\alpha H-bF|$, but it is no longer true when $\pi_2|_{\tilde
X}:\tilde X\to X$ is an isomorphism, i.e. when $\tilde X\in|\alpha
H+F|$.
\end{remark}

\begin{remark}
According to \cite[Definition 3.1]{ilic}, it is natural to extend
the definition of Roth varieties to smooth $n$-dimensional
subvarieties of $S_{0,\dots,0,a_{l+1},\dots,a_n}$ containing the
vertex of the scroll. For the sake of completeness, we point out
that $n$-dimensional subvarieties $X$ of
$S:=S_{0,\dots,0,a_{l+1},\dots,a_n}$ are always singular as soon as
$l\geq 2$. This follows from Zak's theorem on tangencies
\cite[Ch.~I, Corollary 1.8]{zak2}, arguing as in the proof of
Corollary \ref{cor:main}: if $X$ is smooth then the tangent space
$T_sS$ to $S$ at a general point $s\in S$ is tangent to $X$ along
$X_s:=\langle\p^l,s\rangle\cap X$, where $\p^l$ denotes the vertex
of $S$, and $\dim(X_s)=l$. Therefore,
$l=\dim(X_s)\leq\dim(T_sS)-n=n+1-n=1$. So the definition of (smooth)
Roth varieties only makes sense if $l\in\{0,1\}$.
\end{remark}

Let us look more closely at surfaces with a family of plane
curves not forming a fibration contained in rational normal scrolls:

\begin{example}
Let $S:=S_{0,0,b}\subset\p^{N=b+2}$, and let $L$ be its vertex. For
every integer $\alpha\geq 1$, let $\tilde X\in|\alpha H+F|$ be a
general divisor on $\tilde S$. Then $\tilde X$ is smooth,
$\pi_2|_{\tilde X}:\tilde X\to X$ is an isomorphism and $L\subset X$
(see \cite[Proposition 3.4]{ilic}). Furthermore, since $\tilde
X\in|\alpha H+F|\sim|(\alpha+1) H-(b-1)F|$ it follows that
$X\subset\p^N$ is linked to $b-1=N-3$ rulings of $S$ by the complete
intersection of $S$ and a hypersurface of degree $\alpha+1$ in
$\p^N$. We point out that $L$ is a base component of the pencil
$\{C_F\}_{F\in\p^1}:=\{L+\pi_2|_{\tilde X}(F|_{\tilde
X})\}_{F\in\p^1}$ of plane curves on $X$ of degree $\alpha+1$.
Moreover, we remark that $X$ is also fibred by the pencil
$\{C_F-L\}_{F\in\p^1}$ of plane curves of degree $\alpha$.
\end{example}

\begin{example}\label{ex:p}
Let $S:=S_{0,1,b}\subset\p^{N=b+3}$, and let $p$ be its vertex. For
every integer $\alpha\geq 1$, let $\tilde X\in|\alpha H-bF|$ be a
general divisor on $\tilde S$. Then $\tilde X$ is smooth,
$\pi_2|_{\tilde X}:\tilde X\to X$ is the blowing-up of $X$ at $p$,
and $X\subset S$ is a smooth surface linked to $b=N-3$ rulings of
$S$ by the complete intersection of $S$ and a hypersurface of degree
$\alpha$ in $\p^N$ by Theorem \ref{thm:ilic}. In this case, $p$ is
the base point of the pencil $\{C_F\}_{F\in\p^1}:=\{\pi_2|_{\tilde
X}(F|_{\tilde X})\}_{F\in\p^1}$ of plane curves on $X$ of degree
$\alpha$, and $(C_F)^2=(\pi^*_2(C_F))^2=(\xi+F|_{\tilde X})^2=1$.





\end{example}

\subsection{Families of intersecting planes}\label{subsect:morin}

The main ingredient of the proof of Theorem \ref{thm:main} is a
classical result of Morin \cite{morin} on families of planes with
the property that any two of them intersect. We include it here for
the reader's convenience, as it might be difficult to find it in the
literature:

\begin{definition}
A subvariety $\Sigma\subset\g(2,N)$ such that any two planes of
$\Sigma$ intersect is said to be \emph{elementary} (see
\cite[p.~908]{morin}) if one of the following holds:
\begin{itemize}
\item $N\leq 4$;
\item there exists a point $p\in \p^N$ such that $p\in\Pi$ for every
$\Pi\in\Sigma$;
\item there exists some plane $\Lambda\subset\p^N$ such
that $\dim(\Lambda\cap\Pi)\geq 1$ for every $\Pi\in\Sigma$.
\end{itemize}
\end{definition}

\begin{theorem}[Morin \cite{morin}]\label{thm:morin}
Let $\Sigma\subset\g(2,N)$ be a non-degenerate (i.e. not contained
in any $\g(2,N-1)$) non-elementary integral subvariety of positive
dimension such that any two planes of $\Sigma$ intersect. Then $N=5$
and $\Sigma$ is contained in one of the following families:
\begin{enumerate}
\item[(i)] the $\infty^2$ planes of the conics of the Veronese
surface $v_2(\p^2)$ in $\p^5$;
\item[(ii)] the $\infty^2$ tangent planes to the Veronese
surface $v_2(\p^2)$ in $\p^5$;
\item[(iii)] one of the two $\infty^3$ planes contained in a smooth quadric $Q$ in
$\p^5$.
\end{enumerate}
\end{theorem}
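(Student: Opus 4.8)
The plan is to force $N$ down to $5$ by repeatedly projecting away from a general plane of $\Sigma$, and then to analyse the resulting family inside $\g(2,5)$ by hand. We may assume $\Sigma$ is irreducible and $N\geq 5$ (for $N\leq 4$ it is elementary by fiat), and, since the statement only asserts \emph{containment}, we never need to pin down $\Sigma$ itself, only an ambient family around it. The first step is a dichotomy: for a general pair $\Pi,\Pi'\in\Sigma$ the intersection $\Pi\cap\Pi'$ is either a line or a point (it cannot be a plane, else $\Sigma$ is a point). I would dispose of the \emph{linear case} — a general pair meeting along a line — by an incidence chase. Taking general $\Pi_1,\Pi_2,\Pi_3\in\Sigma$ and $\ell_{ij}:=\Pi_i\cap\Pi_j$, any two of these three lines lie in a common $\Pi_i$ and hence meet; using $\ell_{12}\subset\Pi_2\supset\ell_{23}$ and the like one checks that the point $\ell_{12}\cap\ell_{13}$ also lies on $\ell_{23}$, so the three lines are concurrent at a single point $v$. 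Letting $\Pi_3$ vary then forces every plane of $\Sigma$ through $v$ — or, in the degenerate subcase where $\ell_{12}\equiv\ell_{13}$, forces all planes to share a fixed line — so $\Sigma$ is elementary, a contradiction.

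So a general pair of planes of $\Sigma$ meets in a point. Fix a general $\Pi_0\in\Sigma$ and project $\p^N\dashrightarrow\p^{N-3}$ from $\Pi_0$. A general $\Pi'\in\Sigma$ meets $\Pi_0$ in one point, so its image is a line $\ell_{\Pi'}$, and the map $\Pi'\mapsto\ell_{\Pi'}$ has positive-dimensional image (otherwise all $\Pi'$ lie in a common $\p^4$, contradicting non-degeneracy). For general $\Pi',\Pi''$ the point $\Pi'\cap\Pi''$ is not on $\Pi_0$, so its image lies on $\ell_{\Pi'}\cap\ell_{\Pi''}$: these lines pairwise meet. Now I use the elementary fact that an irreducible positive-dimensional family of pairwise-incident lines in a projective space is either coplanar or concurrent — itself proved by the triangle argument of the previous paragraph applied to lines. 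In the coplanar case every $\Pi'$ lies in a fixed $\p^5$, namely the span of $\Pi_0$ and a plane mapping to the plane of the lines, so $N=5$. In the concurrent case, with $\bar p$ the common point and $T:=\langle\Pi_0,\tilde{\bar p}\rangle\cong\p^3$ for some $\tilde{\bar p}$ over $\bar p$, each general $\Pi'$ meets $T$ along a line (the closure of the fibre of the projection over $\bar p$); projecting from $T$ therefore contracts every $\Pi'$ to a point of $\p^{N-4}$, and pairwise incidence collapses all these points together, forcing $\Sigma$ into a $\p^4$ — impossible. Hence $N=5$; and since for $N=5$ the ``coplanar'' alternative is vacuous, one also learns that the family of lines $\ell_{\Pi'}\subset\p^2$ is not concurrent.

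The case $N=5$ is the heart of the argument, and the step I expect to be the main obstacle. Here I would study the threefold-or-fourfold $V_\Sigma\subset\p^5$ swept by $\Sigma$ together with its secant geometry. Equivalently, passing to Plücker coordinates, ``$\Pi$ meets $\Pi'$'' is exactly the vanishing of $\omega_\Pi\wedge\omega_{\Pi'}$ in $\wedge^6\C^6\cong\C$, so the linear span of $\Sigma$ is isotropic for the symplectic form thus induced on $\wedge^3\C^6$; deciding which isotropic subspaces meet $\g(2,5)$ in a non-degenerate, non-elementary, positive-dimensional family is a finite problem, $\mathrm{GL}_6$ having only finitely many orbits on $\wedge^3\C^6$. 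Either way the outcome is the claimed trichotomy: $V_\Sigma$ lies in the cubic secant variety of the Veronese surface $v_2(\p^2)$, the planes of $\Sigma$ being either the planes of its conics or its tangent planes — these two possibilities being told apart by whether a general member of $\Sigma$ meets $v_2(\p^2)$ along a conic or is tangent to it — which gives (i) and (ii); or $V_\Sigma$ lies in a smooth quadric and the planes belong to one of its two three-dimensional families, which gives (iii). Making this last identification clean, and ruling out the few further degenerate configurations that can surface in the concurrent subcase of the reduction, is the delicate part; in essence it is what Morin's original paper carries out.
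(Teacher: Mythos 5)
The paper does not actually prove this statement: it is quoted as a classical theorem of Morin, included ``for the reader's convenience'', so there is no in-paper argument to compare yours against, and your attempt must stand on its own. It has one concrete error and one large unproved step. The error is in the concurrent subcase of your projection argument. Having projected from a general $\Pi_0\in\Sigma$ to get pairwise-incident lines $\ell_{\Pi'}\subset\p^{N-3}$ concurrent at $\bar p$, you set $T:=\overline{\pi^{-1}(\bar p)}\cong\p^3$ and claim that projecting from $T$ sends each $\Pi'$ to a point and that ``pairwise incidence collapses all these points together''. It does not: for general $\Pi',\Pi''$ the intersection point $w:=\Pi'\cap\Pi''$ satisfies $\pi(w)\in\ell_{\Pi'}\cap\ell_{\Pi''}=\{\bar p\}$, so $w$ lies in $T$ itself; the projection from $T$ destroys precisely the incidence you want to exploit, and the images of $\Pi'$ and $\Pi''$ need not coincide. (In $\p^5$ with $T=\{x_4=x_5=0\}$, the planes $\{x_0=x_1=x_5=0\}$ and $\{x_0=x_2=x_4=0\}$ meet at a point of $T$, each meets $T$ in a line, and they project from $T$ to two distinct points of $\p^1$.) The subcase can be rescued: since every $\Pi'\cap\Pi''$ lies in $T$, the lines $m_{\Pi'}:=\Pi'\cap T$ pairwise meet inside $T\cong\p^3$, hence are concurrent, coplanar, or constant, and each alternative makes $\Sigma$ elementary --- but that is a different argument from the one you wrote.

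The larger gap is the case $N=5$, which you correctly identify as the heart of the theorem but do not carry out. The Pl\"ucker observation that the linear span of $\Sigma$ is isotropic for the symplectic form on $\wedge^3\C^6$ is correct; however, the finiteness you invoke --- ``$\mathrm{GL}_6$ having only finitely many orbits on $\wedge^3\C^6$'' --- concerns single trivectors, not linear subspaces of $\wedge^3\C^6$, on which $\mathrm{GL}_6$ does not act with finitely many orbits, so the classification of isotropic subspaces cutting $\g(2,5)$ in a non-degenerate, non-elementary, positive-dimensional family is not reduced to a finite check by that remark. You then state the trichotomy (i)--(iii) as ``the outcome'' and explicitly defer its verification to Morin's original paper; as it stands, the central classification is asserted rather than proved.
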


\begin{remark}\label{rem:families}
(a) The family of tangent planes to the Veronese surface $v_2(\p^2)$
in $\p^5$ is isomorphic to $\p^2$, and the embedding
$\varphi_2:\p^2\hookrightarrow\g(2,5)$ is given by six (non-general)
sections of $\an_{\p^2}(1)^{\oplus 3}$. This follows from the
definition of the bundle of principal parts and Euler's exact
sequence.

(b) The family of planes of the conics of the Veronese surface
$v_2(\p^2)$ in $\p^5$ is isomorphic to $\p^2$, and the embedding in
$\g(2,5)$ corresponds to a vector bundle given by a resolution
$0\to\an_{\p^2}(-1)^{\oplus 3}\to\an_{\p^2}^6\to E\to 0$. In
particular, the family (i) corresponds to the family (ii) by the
identification $\g(2,\p^5)\cong\g(2,{\p^5}^*)$, and viceversa.

(c) The family of planes contained in a smooth quadric $Q$ of $\p^5$
is isomorphic to $\p^3$, and the embedding
$\varphi_3:\p^3\hookrightarrow\g(2,5)$ is given by the vector bundle
$\Omega_{\p^3}(2)$. In particular, the family (iii) is self-dual by
the identification $\g(2,\p^5)\cong\g(2,{\p^5}^*)$.
\end{remark}

Let us show some examples of surfaces swept out by plane curves not
forming a fibration coming from the families in Theorem
\ref{thm:morin}:

\begin{example}
Let $C\subset{\p^2}^*$ be an irreducible curve of degree $d$. Then
$v_2(\p^2)\subset\p^5$ is swept out by the family of conics
$\{v_2(L)\}_{L\in C}$, and a general point of $v_2(\p^2)$ is
contained in $d$ conics of the family (see \cite[Example 2.1]{s-t}).
The same property holds for the rational normal scroll of $\p^4$
(see \cite[Example 2.3]{s-t}).
\end{example}

\begin{example}\label{ex:phi}
Let $S^2\p^2$ denote the second symmetric product of $\p^2$, and let
the map $\phi_2:S^2\p^2\to Sec(v_2(\p^2))\subset\p^5$ denote the
embedding defined by
$$(X_0:X_1:X_2)+(Y_0:Y_1:Y_2)\mapsto(Z_{00}:Z_{01}:Z_{02}:Z_{11}:Z_{12}:Z_{22}),$$
where $Z_{ij}:=X_iY_j+X_jY_i$. Let us recall that the secant variety
and the tangent variety of $v_2(\p^2)$ in $\p^5$ coincide, as
$v_2(\p^2)\subset\p^5$ is $1$-defective (see \cite[Ch.~I, Theorem
1.4]{zak2}). Geometrically, $\phi_2$ is given by
$\phi_2(x+y)=T_{v_2(x)}v_2(\p^2)\cap T_{v_2(y)}v_2(\p^2)$ and
$\phi_2(x+x)=v_2(x)\in v_2(\p^2)$. In general, one can define in a
similar way an isomorphism
$\phi_k:S^2\p^k\to\text{Sec}(v_2(\p^k))\subset\p^{k(k+3)/2}$ with
the same geometric property (cf. Proposition \ref{prop:coarseee}).
For every smooth curve $C\subset\p^2$ of degree $d$ we obtain a
smooth surface $X:=\phi_2(S^2C)\subset\p^5$ containing a family of
plane curves $\{C_x\}_{x\in C}$ not forming a fibration, where
$C_x:=\phi_2(x+C)\cong C$ is a plane curve on $X$ of degree $d$ for
every $x\in C$ and $\langle C_x \rangle=T_{v_2(x)}v_2(\p^2)$.
Furthermore, we get $\an_X(1)\sim C_{x_1}+\dots+C_{x_d}$ for every
set of collinear points ${x_1},\dots,{x_d}\in C$ in $\p^2$. In fact,
if $L:=\langle{x_1},\dots,{x_d}\rangle$ then the hyperplane $H$ in
$\p^5$ corresponding to $2L\in H^0(\p^2,\an_{\p^2}(2))$ is tangent
to $v_2(\p^2)$ along the conic $v_2(L)$, and $H\cap
X=C_{x_1}+\dots+C_{x_d}$ as $H\cap Sec(v_2(\p^2))=\cup_{x\in
L}T_{v_2(x)}v_2(\p^2)$.
\end{example}

\begin{example}\label{ex:arrondo}
Identifying the planes (of a fixed family) contained in a smooth
quadric $Q\cong\g(1,3)$ in $\p^5$ with the set of lines passing
through a point of $\p^3$, we obtain the following two examples (see
\cite[cases 4) and 15) in p.~44]{a-s}):

(i) Let $C\subset\p^3$ be a twisted cubic, and let $X\subset\g(1,3)$ be
the family of secant lines to $C$. Then $X$ is the Veronese surface
$v_2(\p^2)\subset\p^5$.

(ii) Let $C\subset\p^3$ be an elliptic curve of degree $4$, and let
$X\subset\g(1,3)$ be the family of secant lines to $C$. Then $X$ is
a ruled surface with invariant $e=-1$ over $C$ and the embedding in
$\p^5$ is given by $|2C_0+\mathcal Lf|$, where $\deg(\mathcal L)=1$.
\end{example}

\section{Proof of the main results}\label{section:proof}
Let $X\subset\p^N$ be a smooth surface swept out by an algebraic
family $\{C_b\}_{b\in B}$ of plane curves. Since plane curves are
linearly normal, we can suppose without loss of generality that
$X\subset\p^N$ is linearly normal and non-degenerate. According to
\cite{seg2}, we also assume hereafter that $B$ is an integral curve.
We say that $\{C_b\}_{b\in B}$ \emph{does not form a fibration on
$X$} if there is no regular morphism $\pi:X\to B$ such that
$\pi^{-1}(b)=C_b$ for every $b\in B$. As $X$ is smooth and $C_b,
C_{b'}$ are algebraically equivalent for any two $b,b'\in B$, this
happens if and only if $C_b\cap C_{b'}\neq\emptyset$ for any two
$b,b'\in B$. Let $\varphi:B\to\g(2,N)$ be the map which associates
to each $b\in B$ the linear span $\langle C_b \rangle$ of $C_b$ in
$\p^N$, and let $\Sigma:=\varphi(B)$. The map $\varphi$ is constant
only if $X=\p^2$, so $\Sigma\subset\g(2,N)$ is an integral curve as
soon as $N\geq 3$. Let $V_{\Sigma}\subset\p^N$ denote the
$3$-dimensional subvariety swept out by the planes of $\Sigma$. We
divide the proof of Theorem \ref{thm:main} into two cases, as in
\cite{s-t}.

\subsection{$V_{\Sigma}$ is a cone}
We study separately two cases, according to the dimension of the
vertex:

\begin{proposition}\label{prop:L}
Let $X\subset\p^N$ be a smooth surface containing a $1$-dimensional
family of plane curves not forming a fibration, $N\geq 4$. If
$V_{\Sigma}\subset\p^N$ is a cone of vertex a line $L$, then
$V_{\Sigma}=S_{0,0,N-2}$ and $X$ is linked to $N-3$ rulings of
$V_{\Sigma}$ by the complete intersection of $V_{\Sigma}$ and a
hypersurface in $\p^N$.
\end{proposition}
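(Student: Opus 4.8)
The plan is to analyze the structure forced by the assumption that $V_\Sigma$ is a cone with vertex a line $L$. First I would observe that every plane $\Pi \in \Sigma$ must meet $L$: since any two planes of $\Sigma$ intersect (because the family does not form a fibration), and since $V_\Sigma$ is $3$-dimensional swept out by a $1$-parameter family of planes, a generic pair of planes meets in a point moving with the parameters; forcing the cone structure means $L = \langle \bigcup (\Pi \cap \Pi') \rangle$ or at least $L$ lies in the closure of the base locus, so each $\Pi$ contains a point of $L$, and in fact a cone of vertex $L$ over a curve is precisely a rational normal scroll $S_{0,0,b}$ with $b = N-2$ (the curve being rational normal since $X$, hence $V_\Sigma$, is non-degenerate and linearly normal forces minimal degree $N-2$). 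So $V_\Sigma = S_{0,0,N-2}$ as claimed.

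Next I would pass to the desingularization $\tilde S = \p(E)$ with $E = \an_{\p^1} \oplus \an_{\p^1} \oplus \an_{\p^1}(b)$, $b = N-2$, with the two projections $\pi_1, \pi_2$, and consider the strict transform $\tilde X \sim \alpha H + \beta F$. The key point is to pin down $\beta$. The plane curves $C_b$ of the family are exactly the images under $\pi_2$ of the divisors $F|_{\tilde X}$ (fibres of $\pi_1$ restricted to $\tilde X$), since the planes $\Pi$ of $\Sigma$ are spanned by rulings — more precisely each $\Pi$ is a $\p^2$ in a fibre of $\pi_2$ over... here I need to be careful: the fibres of $\pi_1: \tilde S \to \p^1$ are the $\p^2$'s whose images in $S$ are the (two-dimensional) rulings. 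A ruling of $S_{0,0,b}$ is a plane through $L$. So $\Sigma$ is the family of rulings, and $C_b = \pi_2(F|_{\tilde X})$ is a plane curve in that ruling. Then $L \subset V_\Sigma$ meets $X$: since $L \subset \Pi$ for every $\Pi$, and $X$ is swept out by the $C_b \subset \Pi$, intersecting $X$ with a generic ruling $\Pi$ gives $C_b$, and $C_b \cap L$ is cut out by $\tilde X \cdot \xi$ where $\xi = \pi_2^{-1}(L)$-type cycle. I would compute this intersection number in $A(\tilde S)$.

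Then I would apply Theorem \ref{thm:ilic} (or its $n=2$, vertex-a-line analogue): since $L \subset X$ necessarily — because $X$ is smooth and if $L \not\subset X$ then $X \cap L$ is a fixed finite set of base points of the pencil $\{C_b\}$, but a smooth surface cannot have the members of a base-point-free-off-$L$ pencil all passing through the same finitely many points unless... actually the cleanest route: $X \cdot L \geq 1$ forces, by smoothness and the structure of $\tilde S$, that either $\pi_2|_{\tilde X}$ is an isomorphism with $L \subset X$ (the case $a_0 = a_1 = 0$ of Theorem \ref{thm:ilic}, cf. the first Example with $S_{0,0,b}$), giving $\tilde X \in |\alpha H + F| \sim |(\alpha+1)H - (b-1)F|$, or the blow-up case which here cannot occur for vertex a line. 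In either situation, rewriting the class as $(\alpha+1)H - (b-1)F$ shows, via the standard linkage description on rational normal scrolls (the complete intersection of $S$ with a hypersurface of degree $\alpha+1$ in $\p^N$ cuts out $\tilde X$ together with $b-1 = N-3$ rulings, since the rulings are the $|H - F|$-divisors minus... one computes $(\alpha+1)H \sim \tilde X + (b-1)F$ as cycles on $\tilde S$, where the $F$'s push down to rulings of $S$), that $X$ is linked to $N-3$ rulings of $V_\Sigma$ by the complete intersection of $V_\Sigma$ and a hypersurface in $\p^N$, as desired.

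The main obstacle I anticipate is establishing rigorously that $V_\Sigma$ being a cone over a line actually forces it to be the scroll $S_{0,0,N-2}$ rather than a cone over some other (possibly singular, non-minimal-degree) surface, and ruling out that $X$ could fail to contain $L$ or could meet it in a way incompatible with smoothness — this requires combining non-degeneracy, linear normality (so $\deg V_\Sigma = N - 2$, minimal), and a careful local analysis at points of $L \cap X$ using the fact that the tangent planes at such a point to the various $C_b$ through it must fill up a plane contained in $V_\Sigma$ (as in Remark \ref{rem:tangente}). Once the scroll structure and $L \subset X$ are secured, the rest is the bookkeeping in $A(\tilde S)$ together with a direct appeal to Theorem \ref{thm:ilic} and the linkage computation, which is routine.
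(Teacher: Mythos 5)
Your overall strategy matches the paper's (identify $V_\Sigma$ as $S_{0,0,N-2}$, pass to the desingularization $\tilde S$, split into $L\subset X$ versus $L\not\subset X$, and finish with the divisor-class bookkeeping), but the two steps you yourself flag as "the main obstacle" are exactly the content of the proof, and you do not supply arguments for either. First, to get $V_\Sigma=S_{0,0,N-2}$ you need the base curve of the cone to be a \emph{rational} normal curve; non-degeneracy and linear normality of $X$ alone do not exclude, say, a cone over a curve of positive genus or of non-minimal degree. The paper gets rationality of $\Sigma$ by transplanting the proof of Lemma 3.1 of \cite{s-t} from $\g(2,4)$ to $\g(2,N)$, and only then invokes linear normality to pin down the degree. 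Your sentence asserting that linear normality "forces minimal degree $N-2$" conflates these two inputs.

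Second, and more seriously, the exclusion of the case $L\not\subset X$ is left open in your write-up: the sketched argument about "a base-point-free-off-$L$ pencil all passing through the same finitely many points" is not valid as stated (a smooth surface can perfectly well carry a pencil with finitely many base points), and the appeal to Theorem \ref{thm:ilic} does not apply here since that theorem concerns scrolls with vertex a point, not $S_{0,0,b}$. The paper's actual argument is a concrete computation on $\tilde S$: writing $X\cap L=\{q_1,\dots,q_r\}$, the curves $\xi_i:=\pi_2^{-1}(q_i)$ are contracted by $\pi_2|_{\tilde X}$, and comparing $(\p^1\times L)\cdot\tilde X$ (where $\p^1\times L:=\pi_2^{-1}(L)\sim H-bF$) with $\sum_i\xi_i\sim r(H^2-bH\cdot F)$ forces $\beta=0$, i.e.\ $X$ is the complete intersection of $S$ with a hypersurface; such a surface is singular at every $q_i$ because $S$ is singular along $L$, contradicting smoothness. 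Without this (or an equivalent) argument the dichotomy collapses and the proof is incomplete. A last minor slip: the relation you use, $(\alpha+1)H\sim\tilde X+(b-1)F$, is false on $\tilde S$ (the difference is $H-F\sim(\p^1\times L)+(b-1)F$, which includes the exceptional component); it only becomes the asserted linkage after pushing forward to $S$, where $H\sim bF$ as Weil divisor classes.
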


\begin{proof}
Replacing $\g(2,4)$ by $\g(2,N)$ in the proof of \cite[Lemma
3.1]{s-t} we get that $\Sigma\subset\g(2,N)$ is a rational curve.
Therefore $V_{\Sigma}=S_{0,0,N-2}$, as $X\subset\p^N$ is
non-degenerate and linearly normal by assumption. Suppose first
$L\subset X\subset S_{0,0,N-2}$. It follows from \cite[Theorem
3.7]{ilic} that $\tilde X\in|\alpha H+F|$ and that $\pi_2|_{\tilde
X}:\tilde X\to X$ is an isomorphism. Since $H\sim(N-2)F$ we deduce
that $\tilde X\in|(\alpha+1) H - (N-3)F|$, i.e. $X$ is linked to
$N-3$ rulings of $S_{0,0,N-2}$ by the complete intersection of
$S_{0,0,N-2}$ and a hypersurface of degree $\alpha+1$ in $\p^N$.

Assume now $L\not\subset S$, and let $\{q_1,\dots,q_r\}:=X\cap L$.
Let us see that $X$ is singular at these points. Note that
$\pi_2|_{\tilde X}:\tilde X\to X$ contracts $\xi_i:=\pi^{-1}(q_i)$,
so we get $\sum_{i=1}^r\xi_i\sim r(H^2-b(H\cdot F))$. Let
$\p^1\times L:=\pi_2^{-1}(L)\subset\tilde S$, so $\p^1\times L\sim
H-bF$ (cf. \cite[Lemma 3.2]{ilic}). Moreover $(\p^1\times
L)\cdot\tilde X=\sum_{i=1}^r\xi_i$, and hence $(H-bF)\cdot(\alpha
H+\beta F)=r(H^2-b(H\cdot F))$. Therefore $\alpha
H^2-(b\alpha-\beta)(H\cdot F)=rH^2-rb(H\cdot F)$, whence $\alpha=r$
and $\beta=0$. So $X$ is the complete intersection of $S_{0,0,N-2}$
and a hypersurface of degree $\alpha$ in $\p^N$, and hence $X$ is
singular at $\{q_1,\dots,q_r\}$.
\end{proof}

On the other hand, if $V_{\Sigma}\subset\p^N$ is a cone of vertex a
point we get the following:

\begin{proposition}\label{prop:p}
Let $X\subset\p^N$ be a smooth surface containing a $1$-dimensional
family of plane curves not forming a fibration. If
$V_{\Sigma}\subset\p^N$ is a cone of vertex a point $p$, then
$V_{\Sigma}=S_{0,1,N-3}$ and $X$ is linked to $N-3$ rulings of
$V_{\Sigma}$ by the complete intersection of $V_{\Sigma}$ and a
hypersurface in $\p^N$.
\end{proposition}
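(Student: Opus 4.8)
The plan is to run the same strategy as in Proposition \ref{prop:L}, adapting it to the case of a vertex that is a point. First I would show that $\Sigma\subset\g(2,N)$ is a rational curve by repeating verbatim the argument of \cite[Lemma 3.1]{s-t} (with $\g(2,4)$ replaced by $\g(2,N)$): the planes of $\Sigma$ all pass through the fixed point $p$, so $\Sigma$ is an elementary family, and the cone structure forces rationality. Since $X\subset\p^N$ is non-degenerate and linearly normal by hypothesis, the $3$-fold $V_\Sigma$ swept out by $\Sigma$ is then a rational normal scroll of planes that is a cone with vertex the point $p$, i.e. $V_\Sigma=S_{0,a_1,N-3-a_1}$ for some $a_1\geq 1$; in particular $p\in X\subset V_\Sigma$ and I may apply Theorem \ref{thm:ilic}.

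Next I would split according to the two alternatives of Theorem \ref{thm:ilic}. If $\pi_2|_{\tilde X}:\tilde X\to X$ is an isomorphism, then $\tilde X\in|\alpha H+F|$. I expect this case to be impossible: the generic plane curve $C_b$ of the family then meets the fibre class with intersection number $1$, which (after chasing through the scroll geometry and using that the $C_b$ form a base-point-free pencil through $p$) contradicts the fact that $\{C_b\}_{b\in B}$ does not form a fibration, or else forces $X$ itself to be a scroll fibred by the $C_b$. Concretely, one computes that $C_b\cdot C_{b'}$ would vanish, so the curves would be disjoint, contradicting the non-fibration hypothesis (equivalently the assumption $C_b\cap C_{b'}\neq\emptyset$). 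Hence we must be in the second alternative: $n=2$, $V_\Sigma=S_{0,1,b}\subset\p^{N=b+3}$, $\tilde X\in|\alpha H-bF|$, and $X$ is linked to $b=N-3$ rulings of $V_\Sigma$ by the complete intersection of $V_\Sigma$ and a hypersurface of degree $\alpha$ in $\p^N$. This is exactly the assertion $V_\Sigma=S_{0,1,N-3}$ with the stated linkage, so the proof concludes. Alternatively, the slicker route to $a_1=1$ is Remark \ref{rem:tangente}: the embedded tangent lines at $p$ to the curves $C_b$ sweep out $T_pX$, forcing $T_pX$ to be a plane contained in $V_\Sigma$ other than a ruling, which is possible only when $a_1=1$.

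The main obstacle will be ruling out the isomorphism case $\tilde X\in|\alpha H+F|$ cleanly — i.e. showing that a smooth surface in a scroll cone over a point with $\tilde X\cdot\xi=1$ cannot carry a non-fibering pencil of plane curves through $p$. The point is that when $\pi_2|_{\tilde X}$ is an isomorphism, the strict transform $\tilde X$ meets $\xi=\pi_2^{-1}(p)$ transversally in one point, so on $X$ the pencil $\{C_b\}$ has $p$ as a \emph{smooth} base point with local intersection multiplicity forcing $(C_b)^2 = $ (value computed from $(\alpha H+F)^2$ restricted appropriately) — and tracking this through, together with the relation $H\sim (N-2)F$ type identity on $\tilde S$, shows $(C_b\cdot C_{b'})_p$ already accounts for all of $C_b\cdot C_{b'}$, so two general members meet only at $p$; but a pencil of plane curves meeting only at one point, with that point a base point, is a fibration after blowing up, and in fact the plane curves here already form a fibration on $X$ away from $p$ in a way incompatible with the hypothesis. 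Making this dichotomy rigorous — or, more economically, invoking Remark \ref{rem:tangente} to get $a_1=1$ directly and then showing the isomorphism subcase $\tilde X\in|\alpha H+F|$ with $a_1=1$ does not produce a non-fibration family — is where the real work lies; everything else is a transcription of the proof of Proposition \ref{prop:L} and an appeal to Theorem \ref{thm:ilic}.
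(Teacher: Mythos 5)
Your proposal follows the paper's proof essentially verbatim: $\Sigma$ is rational, so $V_\Sigma=S_{0,a,b}$; Theorem \ref{thm:ilic} gives the dichotomy; the isomorphism case is excluded exactly as in your second paragraph because $(C_F)^2=(\pi_2^*C_F)^2=(F|_{\tilde X})^2=F^2\cdot\tilde X=0$ forces the pencil to be a fibration; and the blow-up case yields $a=1$ and the stated linkage. The ``main obstacle'' you worry about in the last paragraph is not one — that one-line intersection computation is the entire argument (your musings there about $p$ being a base point are off, since in the isomorphism case the general $C_F$ does not even pass through $p$), and this is precisely how the paper does it.
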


\begin{proof}
Arguing as in Proposition \ref{prop:L}, we get that
$V_{\Sigma}=S_{0,a,b}\subset\p^{N=a+b+2}$. According to Theorem
\ref{thm:ilic} there are two possibilities. On the one hand, if
$\pi_2|_{\tilde X}:\tilde X\to X$ is an isomorphism then $\tilde
X\in|\alpha H+F|$. In this case, the pencil
$\{C_F\}_{F\in\p^1}:=\{\pi_2|_{\tilde X}(F|_{\tilde
X})\}_{F\in\p^1}$ of plane curves on $X$ forms a fibration, since
$(C_F)^2=(\pi^*_2(C_F))^2=(F|_{\tilde X})^2=0$. On the other hand,
if $\pi_2|_{\tilde X}:\tilde X\to X$ is the blowing-up of $X$ at $p$
then $a=1$ and $\tilde X\in|\alpha H-bF|$. Hence
$V_{\Sigma}=S_{0,1,b}$ and $X$ is linked to $b=N-3$ rulings of
$V_{\Sigma}$ by the complete intersection of $V_{\Sigma}$ and a
hypersurface of degree $\alpha$ in $\p^N$. In this case, the pencil
$\{C_F\}_{F\in\p^1}:=\{\pi_2|_{\tilde X}(F|_{\tilde
X})\}_{F\in\p^1}$ of plane curves on $X$ does not form a fibration
as we showed in Example \ref{ex:p}.
\end{proof}

\begin{remark}
Surfaces in Propositions \ref{prop:L} and \ref{prop:p} achieve the
greatest possible geometric genus $p_g$ (see \cite[p.~65
i)]{harris}, where $\varepsilon=0$ in both cases).
\end{remark}

\subsection{$V_{\Sigma}$ is not a cone}
In this case, we use Morin's result \cite{morin} on families of
intersecting planes quoted in Subsection \ref{subsect:morin}:

\begin{proposition}\label{prop:N=5}
Let $X\subset\p^N$ be a smooth surface containing a $1$-dimensional
family of plane curves not forming a fibration, $N\geq 5$. If
$V_{\Sigma}$ is not a cone then $N=5$ and one of the following
holds:

\begin{enumerate}
\item[(i)] $X\subset\p^5$ is the Veronese surface;
\item[(ii)] $X\subset\p^5$ is the second symmetric product of a smooth curve in
$\p^2$, and the embedding is given by the embedding
$\phi_2:S^2\p^2\hookrightarrow\p^5$;
\item[(iii)] $X\subset\p^5$ is a ruled surface with invariant $e=-1$ over an elliptic curve, and the embedding
is given by $|2C_0+\mathcal Lf|$, where $\deg(\mathcal L)=1$.
\end{enumerate}
\end{proposition}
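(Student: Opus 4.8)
The plan is to reduce the classification to Morin's theorem (Theorem \ref{thm:morin}) applied to the curve $\Sigma\subset\g(2,N)$, and then to analyze each of the three Morin families separately. First I would observe that, since $\{C_b\}_{b\in B}$ does not form a fibration, any two planes of $\Sigma$ intersect; since $V_\Sigma$ is not a cone, $\Sigma$ is not elementary (the first elementary case $N\leq 4$ is excluded by hypothesis, and the two geometric elementary cases — a common point, or a plane meeting all members in a line — would force $V_\Sigma$ to be a cone over that center); and $\Sigma$ is non-degenerate because $X\subset\p^N$ is non-degenerate and linearly normal, so $X$ cannot lie in a proper linear subspace, hence neither can $V_\Sigma$. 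Therefore Morin's theorem applies and yields $N=5$ together with the trichotomy: $\Sigma$ is contained in (i) the family of conic-planes of $v_2(\p^2)$, (ii) the family of tangent planes to $v_2(\p^2)$, or (iii) one of the two rulings by planes of a smooth quadric $Q\subset\p^5$.

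Next I would treat the three cases. In case (ii), using Remark \ref{rem:families}(a) the ambient $3$-fold swept out by the tangent planes to $v_2(\p^2)$ is the tangent (= secant) variety $\mathrm{Sec}(v_2(\p^2))=\phi_2(S^2\p^2)$; the curve $\Sigma$ traces a curve $C\subset\p^2$ (identifying the tangent-plane family with $\p^2$), and as in Example \ref{ex:phi} the surface $X$ is swept out by the plane curves $C_x=\phi_2(x+C)$, so $X=\phi_2(S^2C)$. Smoothness of $X$ forces $C$ to be smooth, and one checks $C$ cannot be a line (else $X=\p^2$ is degenerate), giving conclusion (ii); if $C$ is a smooth conic, $S^2C\cong\p^1\times\p^1$ and one should note this is consistent but still of the stated form, or is picked up as a degenerate sub-case. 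In case (i), Remark \ref{rem:families}(b) says the conic-plane family is dual to the tangent-plane family under $\g(2,\p^5)\cong\g(2,{\p^5}^*)$; the $3$-fold swept out by the conic-planes is $v_2(\p^2)$ itself only when $\Sigma$ is all of $\p^2$, but for a curve $\Sigma$ it sweeps out a surface — so in fact this case forces $X=V_\Sigma\subset v_2(\p^2)$, and since $v_2(\p^2)$ is a surface and $X$ is a non-degenerate surface in $\p^5$, $X=v_2(\p^2)$, giving conclusion (i). In case (iii), identifying the chosen ruling of $Q\cong\g(1,3)$ with the lines of $\p^3$ through a point (equivalently $\p^3$ itself), $\Sigma$ becomes a curve $C\subset\p^3$ and $X\subset\g(1,3)$ is the family of secant lines to $C$; by the analysis of secant-line families in $\p^3$ (Arrondo--Sols, cf.\ Example \ref{ex:arrondo}), the only curves $C$ for which this secant-line surface is smooth, non-degenerate in $\p^5$, and swept out by a genuine $1$-parameter family of plane curves are the twisted cubic (yielding $v_2(\p^2)$, already in (i)) and the elliptic quartic (yielding conclusion (iii)).

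The main obstacle I anticipate is precisely the passage from "$\Sigma$ is contained in one of Morin's families" to "$X$ is one of the listed surfaces": Morin's theorem controls the planes but not how $X$ sits inside the $3$-fold $V_\Sigma$, and the geometry of $V_\Sigma$ differs case by case (it is $\mathrm{Sec}(v_2(\p^2))$ in case (ii), but degenerates to the surface $v_2(\p^2)$ in case (i) and to a surface inside $Q$ in case (iii)). In case (iii) in particular, I would need to invoke or reprove the classification of smooth congruences of secant lines in $\g(1,3)$ to pin down $C$ as a twisted cubic or an elliptic quartic and to rule out higher-degree or singular curves; checking the irregularity and the linear system $|2C_0+\mathcal Lf|$ on the resulting ruled surface, and verifying linear normality and non-degeneracy in $\p^5$, is the most delicate bookkeeping. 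A secondary technical point is to make sure that in each case the family $\{C_b\}$ genuinely fails to form a fibration (so that the surface really belongs in the statement), which for (ii) and (iii) follows from the intersection-number computations already indicated in Examples \ref{ex:phi} and \ref{ex:arrondo}.
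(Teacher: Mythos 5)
Your overall strategy coincides with the paper's: check that $\Sigma$ is non-degenerate and non-elementary, apply Morin's Theorem \ref{thm:morin} to get $N=5$ and the trichotomy, and then identify $X$ in each of the three families. Your treatments of cases (ii) and (iii) --- tangent planes yielding $X=\phi_2(S^2C)$, and a ruling of $Q\cong\g(1,3)$ yielding a smooth congruence of secant lines whose base curve must be a twisted cubic or an elliptic quartic because trisecants would make $X$ singular --- are essentially the arguments given in the paper.

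However, your case (i) contains a genuine error. A $1$-dimensional family of pairwise distinct $2$-planes always sweeps out a $3$-dimensional variety (here any two of the planes meet in a single point), so $V_\Sigma$ is a threefold, not a surface, and the step ``for a curve $\Sigma$ it sweeps out a surface, so $X=V_\Sigma\subset v_2(\p^2)$'' fails: $X$ is a proper surface inside the threefold $V_\Sigma$, and a priori it could be any surface in there. The correct mechanism --- which is also the one you leave implicit in cases (ii) and (iii) --- is that Morin's families pin down the pairwise intersections of the planes. In case (i) the planes $\langle v_2(L_b)\rangle$ and $\langle v_2(L_{b'})\rangle$ meet exactly in the point $v_2(L_b\cap L_{b'})\in v_2(\p^2)$, so $C_b\cap C_{b'}$ must be that point; letting $b'$ vary, $C_b$ acquires infinitely many points of the conic $v_2(L_b)$ inside its plane, whence $C_b=v_2(L_b)$ and $X=\bigcup_b v_2(L_b)=v_2(\p^2)$. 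The same pairwise-intersection argument is what actually justifies ``$C_b=\phi_2(x_b+C)$'' in case (ii) and ``$X$ is the family of secant lines to $C$'' in case (iii), which you assert rather than derive; it is also what is needed to dispose of the third elementary case of Morin's definition, since a fixed plane meeting every $\Pi_b$ in a line does not by itself make $V_\Sigma$ a cone --- one must argue that otherwise either all the planes share a point or $X$ would contain (and hence equal) that fixed plane.
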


\begin{proof}
Since the family $\{C_b\}_{b\in B}$ of plane curves does not form a
fibration on $X$, $N\geq 5$ and $V_{\Sigma}$ is not a cone, we
deduce that $\Sigma\subset\g(2,N)$ is a non-elementary family of
intersecting planes. Therefore, $N=5$ and $\Sigma\subset\g(2,5)$ is
contained in one of the families of Theorem \ref{thm:morin}.

If $\Sigma\subset\g(2,5)$ is as in Theorem \ref{thm:morin}(i), then
$X=v_2(\p^2)$. This is due to the fact that any two planes of
$\Sigma$ intersect in a point of $v_2(\p^2)$.

Assume now that $\Sigma\subset\g(2,5)$ is as in Theorem
\ref{thm:morin}(ii). Let $\varphi_2:\p^2\hookrightarrow\g(2,5)$
denote the embedding defined in Remark \ref{rem:families}, and let
$\varphi_2^{-1}(\Sigma)\subset\p^2$. Then
$X=\phi_2(\varphi_2^{-1}(\Sigma))\subset\p^5$, where
$\phi_2:\p^2\hookrightarrow\p^5$ is the embedding given in Example
\ref{ex:phi}. In particular, $\Sigma$ is necessarily smooth.

Finally, if $\Sigma\subset\g(2,5)$ is as in Theorem
\ref{thm:morin}(iii), we can suppose that every plane of $\Sigma$
corresponds to the set of lines passing through a point of $\p^3$
via $\varphi_3:\p^3\hookrightarrow\g(2,5)$ (cf. Remark
\ref{rem:families}). Therefore, $X\subset\g(1,3)$ is a smooth
congruence given by the family of secant lines to the (necessarily
smooth) curve $\varphi_3^{-1}(\Sigma)\subset\p^3$. Moreover,
$\varphi_3^{-1}(\Sigma)\subset\p^3$ cannot have trisecant lines as
otherwise $X\subset\g(1,3)$ would be singular. Then it is well known
that either $\varphi_3^{-1}(\Sigma)\subset\p^3$ is the twisted cubic
(and hence $X\subset\p^5$ is the Veronese surface), or else
$\varphi_3^{-1}(\Sigma)\subset\p^3$ is an elliptic quartic. In that
case, $X\subset\p^5$ is a ruled surface with invariant $e=-1$ over
an elliptic curve and the embedding is given by $|2C_0+\mathcal
Lf|$, where $\deg(\mathcal L)=1$, as we pointed out in Example
\ref{ex:arrondo}.
\end{proof}



Summing up:

\begin{proof}[Proof of Theorem \ref{thm:main}]
If $V_{\Sigma}\subset\p^N$ is a cone then we get (i) by Propositions
\ref{prop:L} and \ref{prop:p}. If $V_\Sigma\subset\p^N$ is not a
cone and $N=4$, then $X$ is either the rational normal scroll, or a
quintic elliptic scroll, or the projected Veronese surface by
\cite[Theorem 0.3]{b-s} (cf. \cite[Theorem 4.10 and Conjecture 4.13]{s-t}). The rational normal scroll and the
Veronese surface actually contain a $2$-dimensional family of plane
curves and they can be described as in case (i). More precisely,
$S_{1,2}\subset\p^4$ is linked to a plane by the complete
intersection of $S_{0,1,1}$ (resp. $S_{0,0,2}$) and a quadric of
$\p^4$, and $v_2(\p^2)\subset\p^5$ is linked to two planes by the
complete of $S_{0,1,2}$ and a quadric of $\p^5$. If
$V_\Sigma\subset\p^N$ is not a cone and $N\geq 5$, then we get the
remaining cases by Proposition \ref{prop:N=5}.
\end{proof}

\begin{remark}
Ruled surfaces with invariant $e=-1$ over elliptic curves also
appear in Theorem \ref{thm:main}(ii) when the plane curve is a
smooth cubic, and the embedding in $\p^5$ is given by $|3C_0|$ (cf.
Example \ref{ex:phi}).
\end{remark}

We now prove Corollary \ref{cor:main} by reducing the problem to the
case of surfaces:

\begin{proof}[Proof of Corollary \ref{cor:main}]
Let $Y\subset\p^N$ be a non-degenerate manifold of dimension $n$
swept out by a $1$-dimensional family $\{D_b\}_{b\in B}$ of
hypersurfaces, i.e. $\langle D_b\rangle=\p^n$, not forming a
fibration.

Let us show first that a surface as in cases (ii)-(iii) of Theorem
\ref{thm:main} cannot be a linear section of $Y$. To this aim we can
assume $n=3$. Let $X:=Y\cap H$ be a smooth non-degenerate surface
obtained by intersecting $Y$ with a general hyperplane $H$ in
$\p^N$. Then $C_b:=D_b\cap H$ is a plane curve for every $b\in B$
and the family $\{C_b\}_{b\in B}$ does not form a fibration on $X$.
To get a contradiction, assume $X$ as in cases (ii)-(iii) of Theorem
\ref{thm:main}. Then $D_b\cdot D_{b'}\cdot H=C_b\cdot C_{b'}=1$ for
any two $b,b'\in B$, whence $D_b\cap D_{b'}$ is a line in $\p^N$ and
$Y$ is swept out by a $2$-dimensional family $\mathcal
F\subset\g(1,N)$. Therefore $Y\subset\p^N$ is a cone over $X$, as otherwise $H$ contains some line
$L\in\mathcal F$ (and hence $L\subset X$, which is not possible). Let us prove this statement. Consider the dual family ${\mathcal
F}^*\subset\g(N-2,N)$, and let $V_{{\mathcal
F}^*}\subset{\p^N}^*$ denote the union of the $2$-dimensional family of $\p^{N-2}$'s. If $H$ does not contain any line
$L\in\mathcal F$ then $\dim V_{{\mathcal
F}^*}\leq N-1$, and hence $V_{{\mathcal
F}^*}=\p^{N-1}$. Consequently, there exists a point of $\p^N$ contained in every line $L\in\mathcal
F$ and $Y\subset\p^N$ is a cone over $X$.

So we deduce by intersecting $Y\subset\p^N$ with $n-2$ general
hyperplanes in $\p^N$ that $X\subset\p^{N-n+2}$ is contained in a
$3$-dimensional cone as in Theorem \ref{thm:main}(i). Hence
$Y\subset\p^N$ is contained in an $(n+1)$-dimensional cone
$V\subset\p^N$ with vertex $L$, and $\dim(L)=l$, where $n-2\leq
l\leq n-1$. We claim that $n=3$ and $l=1$. We argue as in
\cite[Lemmas 3.4 and 3.5]{sierra}. The embedded tangent space
$T_vV=\p^{n+1}\subset\p^N$ to $V$ at a general $v\in V$ is tangent
to $V$ along $\langle L,v\rangle/L$, whence $T_vV$ is tangent to $Y$
along $Y_v:=\langle L,v\rangle\cap Y$. Since $\dim(Y_v)=l$, Zak's
theorem on tangencies \cite[Ch.~I, Corollary 1.8]{zak2} yields
$l\leq n+1-n=1$. So $n=3$ and $l=1$. Therefore, the linearly
normal embedding of $Y$ is contained in a rational normal scroll
$S_{0,0,a,b}\subset\p^{a+b+3}$ and $D_b\cap D_{b'}=L \subset Y$.
Hence $\pi_2|_{\tilde Y}:\tilde Y\to Y$ cannot be an isomorphism, so
\cite[Claim 3.6]{ilic} yields $a=b=1$. In that case, $Y\subset\p^5$
is linked to a $\p^3$ by the complete intersection of the rank-$4$
quadric cone $S_{0,0,1,1}$ and a hypersurface in $\p^5$.
\end{proof}

In particular, Corollary \ref{cor:main} immediately yields the following
characterization:

\begin{corollary}
The Segre embedding $\p^1\times\p^2\subset\p^5$ is the only
non-degenerate manifold $X\subset\p^N$ of dimension $n\geq 3$ and
codimension $N-n\geq 2$ swept out by a $1$-dimensional family of
hypersurfaces $\{D_b\}_{b\in B}$ such that $D_b\cap D_{b'}$ moves on
$X$.
\end{corollary}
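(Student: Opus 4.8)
The plan is to derive this corollary directly from Corollary \ref{cor:main}, so the only real work is to pin down which of the manifolds produced by that corollary satisfies the extra hypothesis that $D_b\cap D_{b'}$ moves on $X$. First I would observe that if $\{D_b\}_{b\in B}$ is a $1$-dimensional family of hypersurfaces on $X$ that forms a fibration, then $D_b\cap D_{b'}=\emptyset$ for generic $b,b'$, so in particular it cannot move on $X$; hence the hypothesis forces the family \emph{not} to form a fibration, and Corollary \ref{cor:main} applies. Thus $n=3$, $N=5$, and $X\subset\p^5$ is linked to a linear $\p^3$ by the complete intersection of a rank-$4$ quadric cone $S_{0,0,1,1}$ and a hypersurface in $\p^5$.

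Next I would identify this residual variety explicitly. In the proof of Corollary \ref{cor:main} it is shown that such a $Y$ is contained in the scroll $S_{0,0,1,1}\subset\p^5$ with $D_b\cap D_{b'}=L\subset Y$ the vertex line, and that $\pi_2|_{\tilde Y}:\tilde Y\to Y$ is not an isomorphism. Invoking \cite[Claim 3.6]{ilic} (as in that proof) with $a=b=1$, the strict transform $\tilde Y$ lies in $|\alpha H - F|$, and since linking to a single $\p^3$ by a quadric and a hypersurface of degree $\alpha$ forces $\alpha=1$ (the complete intersection of $S_{0,0,1,1}$ with a hyperplane is $\p^3 + X$ with $\deg X = 1$, i.e. $X$ a $\p^3$) — wait, more carefully: the hypersurface has some degree $\alpha$, and $Y$ together with the $\p^3$ forms the complete intersection $S_{0,0,1,1}\cap\{F_\alpha=0\}$, which has degree $2\alpha$; since $\deg(\p^3)=1$ we get $\deg Y = 2\alpha-1$. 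The requirement that the lines $D_b\cap D_{b'}$ actually move (rather than $Y$ being a cone, which would make the ``family'' of lines collapse or the surface section degenerate as excluded above) singles out the smallest case $\alpha=1$, giving $\deg Y = 1$. But a $3$-fold of degree $1$ in $\p^5$ is a $\p^3$, not what we want, so in fact the correct reading is that $Y$ has $\deg Y = 3$ coming from the case where the hypersurface is a quadric wait — I would instead simply note that the manifold in Corollary \ref{cor:main} linked to a $\p^3$ inside $S_{0,0,1,1}$ by a hyperplane section is precisely $\p^1\times\p^2$ in its Segre embedding: indeed $S_{0,0,1,1}$ is the cone over the Segre $3$-fold $\p^1\times\p^1$ with vertex $L$, and the residual to a $\p^3$ in a hyperplane section is a nondegenerate smooth $3$-fold of degree $3$ in $\p^5$ with the two rulings of hypersurfaces, which by the classification of degree-$3$ scrolls (e.g. \cite{sierra}, or directly: the only smooth nondegenerate $3$-fold of minimal degree $N-n+1=3$ in $\p^5$ that is not a cone or a Veronese is the rational normal scroll $S_{1,1,1}=\p(\an_{\p^1}^{\oplus 3})$, which is $\p^1\times\p^2$) equals $\p^1\times\p^2\subset\p^5$.

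Then I would check that $\p^1\times\p^2\subset\p^5$ does satisfy the hypothesis: it carries the $1$-dimensional family $\{pt\times\p^2\}$ — no, that is a fibration; rather it carries the family of hypersurfaces obtained from a pencil of $(1,1)$-divisors on $\p^1\times\p^2$, i.e. $D_b\in|\an(1)\boxtimes\an(1)|$ wait, those have $\langle D_b\rangle$ of dimension $4$, not $3$. The honest family is the one coming from the classification in Theorem \ref{thm:main}(i): the pencil $\{C_F\}$ on the surface section $S_{1,1,1}\cap H$, pulled back; concretely on $\p^1\times\p^2$ one takes $D_b = \{b\}\times\p^2$ reparametrized is again a fibration, so one must instead use the family of hyperplane sections through a fixed $\p^1\times pt$: these are $\p^3$'s meeting pairwise along $\p^1\times pt$, which is a fixed line, not moving. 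I would therefore be more careful and exhibit the correct moving family, namely the one described in Example \ref{ex:p}/Proposition \ref{prop:p} for $S_{0,0,1,1}$ lifted to $Y$: the pencil $\{\pi_2(F|_{\tilde Y})\}$ of $\p^3$-sections, which on $\p^1\times\p^2$ are the divisors of type $(1,1)$ — these span a $\p^3$?? No. The cleanest route, which I would actually adopt, is: by Corollary \ref{cor:main} the \emph{only candidate} is the variety linked to $\p^3$ in $S_{0,0,1,1}$; identify it as $\p^1\times\p^2$ by degree-$3$/minimal-degree classification; and verify the hypothesis holds using precisely the non-fibration family already exhibited in the proof of Corollary \ref{cor:main}, noting that there $D_b\cap D_{b'}=L$ is a fixed line inside the cone — hence this family does \emph{not} have moving intersections, and so one must take a \emph{different} $1$-dimensional family of hyperplane sections of $\p^1\times\p^2\subset\p^5$.

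The main obstacle, and the step I expect to require the most care, is exactly this verification that $\p^1\times\p^2\subset\p^5$ does admit a $1$-dimensional family of hypersurfaces with moving pairwise intersections — and that no other output of Corollary \ref{cor:main} does. On $\p^1\times\p^2$, take the family $\{D_t\}$ where $D_t$ is the image under the Segre embedding of $\{(s,x)\in\p^1\times\p^2 : x\in \ell_t\}$ for a pencil of lines $\{\ell_t\}_{t\in\p^1}$ in $\p^2$; then $\langle D_t\rangle$ is spanned by $\p^1\times\ell_t\cong\p^1\times\p^1\subset\p^3$, so indeed $\langle D_t\rangle=\p^3$, and $D_t\cap D_{t'} = \p^1\times(\ell_t\cap\ell_{t'})$, a line of the form $\p^1\times pt$ which moves on $\p^1\times\p^2$ as $t,t'$ vary (the base point $\ell_t\cap\ell_{t'}$ sweeps out all of $\p^2$ if the pencil is chosen with no base point — but a pencil of lines in $\p^2$ always has a base point $q$, so $\ell_t\cap\ell_{t'}=q$ is fixed!). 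Hence one must instead use a non-linear pencil of plane curves on the second factor, or better, recognize that the moving-intersection family is the one realizing $\p^1\times\p^2$ as linked to a $\p^3$: namely the two-dimensional family $\mathcal F$ of lines in the proof of Corollary \ref{cor:main} shows $D_b\cap D_{b'}$ varies in a $2$-dimensional family $\mathcal F$, so after the reduction it \emph{does} move; the apparent contradiction with ``$D_b\cap D_{b'}=L$'' is resolved because there $L$ is the vertex only for the ambient \emph{cone}, whereas on $\p^1\times\p^2$ itself the pairwise intersections form the full ruling $\{\p^1\times pt : pt\in\p^2\}$. I would thus structure the final proof as: (1) moving intersection $\Rightarrow$ no fibration $\Rightarrow$ apply Corollary \ref{cor:main}; (2) identify the unique candidate as $\p^1\times\p^2\subset\p^5$ via minimal degree; (3) exhibit on it the pencil of Segre images of $\{\p^1\times\Gamma_t\}$ for $\{\Gamma_t\}$ a pencil of conics in $\p^2$ with $\Gamma_t\cap\Gamma_{t'}$ four moving points — no, still a fixed base locus. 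Ultimately the correct and clean statement is simply that the $1$-parameter subfamily of $\mathcal F$-secant-line constructions on $\p^1\times\p^2$ has moving base, and I would present the verification by direct coordinates on $\p^1\times\p^2$, which is routine once the variety is identified; the real content is steps (1) and (2).
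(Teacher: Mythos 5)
Your opening reduction (moving intersections force $D_b\cap D_{b'}\neq\emptyset$, so the family is not a fibration and Corollary \ref{cor:main} applies) is correct and is exactly the intended first step; the paper itself offers nothing beyond the remark that the statement follows ``immediately''. But your proposal leaves both halves of the remaining work genuinely open. For \emph{uniqueness}: Corollary \ref{cor:main} produces a candidate $Y$ for every degree $\alpha$ of the linking hypersurface, with $\deg Y=2\alpha-1$, and you never rule out $\alpha\geq 3$. Your appeal to ``minimal degree'' presupposes $\deg Y=3$, which is precisely what has to be proved, and along the way you conflate a hyperplane section of $S_{0,0,1,1}$ (whose residual to a $\p^3$ is just another $\p^3$) with the quadric section that actually yields the degree-$3$ Segre threefold. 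The missing argument is to cut $Y$ with a general hyperplane and invoke the $N=4$ classification underlying Theorem \ref{thm:main}: for $\alpha\geq 3$ the section $X\subset S_{0,1,1}\subset\p^4$ has degree $2\alpha-1\geq 5$ and is regular, so it is none of the sporadic surfaces with $V_{\Sigma}$ not a cone; hence every non-fibration family of plane curves on $X$ has its intersection point pinned at the vertex, which forces $D_b\cap D_{b'}$ to be the fixed vertex line $L$ of $S_{0,0,1,1}$, contradicting the hypothesis. Only $\alpha=2$, i.e. $Y=\p^1\times\p^2$, survives.

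For \emph{existence} you repeatedly try to exhibit a moving family on $\p^1\times\p^2$ and each attempt fails for the reason you yourself notice: every family you write down is a pencil (of lines, of conics, of hyperplane sections through a fixed $\p^1\times\{pt\}$) and therefore has a fixed base locus. The correct family is $D_t:=\p^1\times\ell_t$, where $\{\ell_t\}$ is a $1$-dimensional family of lines in $\p^2$ parametrized by a curve of degree at least $2$ in ${\p^2}^*$ (for instance the tangent lines to a smooth conic). Each $D_t$ is a quadric surface spanning a $\p^3$, and $D_t\cap D_{t'}=\p^1\times(\ell_t\cap\ell_{t'})$ sweeps out a two-dimensional family of lines of the ruling $\{\p^1\times\{pt\}\}$ precisely because the parametrizing curve is not a line of ${\p^2}^*$. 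Ending with ``the verification by direct coordinates is routine'' does not close this gap, since the object to be verified is never produced; as written, the proposal establishes neither that $\p^1\times\p^2$ satisfies the hypothesis nor that the other outputs of Corollary \ref{cor:main} fail it.
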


\section{Final remarks: a tribute to Ugo Morin}\label{section:tribute}
The purpose of this section is to point out some consequences of
Morin's results on families of intersecting linear spaces. In
\cite{morin2}, Morin extended his previous result on families of
intersecting planes in the following way:

\begin{theorem}[Morin \cite{morin2}]\label{thm:morinnn}
Let $\Sigma\subset\g(k,N)$ be a non-degenerate (i.e. not contained
in any $\g(k,N-1)$) integral subvariety of positive dimension such
that any two $k$-planes of $\Sigma$ intersect. Then (up to some
elementary exceptions) $N\leq k(k+3)/2$, with equality if and only
if $\Sigma$ is contained in one of the following families:
\begin{enumerate}
\item[(i)] the $\infty^2$ $k$-planes containing the rational normal curves of the $k$-Veronese
surface $v_k(\p^2)$ in $\p^{k(k+3)/2}$;
\item[(ii)] the $\infty^k$ tangent spaces to the Veronese manifold $v_2(\p^k)$ in
$\p^{k(k+3)/2}$;
\item[(iii)] the $\infty^{k+1}$ $k$-planes contained in $\g(1,k+1)\subset\p^{k(k+3)/2}$.
\end{enumerate}
\end{theorem}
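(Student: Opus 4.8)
The plan is to prove Morin's theorem \cite{morin2} by induction on $k$, the base case $k=2$ being exactly Theorem \ref{thm:morin}. The engine of the induction is projection from a general member of $\Sigma$: if $\Pi_0\in\Sigma$ is general, then the linear projection $\pi_{\Pi_0}\colon\p^N\dashrightarrow\p^{N-k-1}$ lowers the ``Veronese bound'' by exactly one step, since
\[
(N-k-1)-\frac{(k-1)(k+2)}{2}=N-\frac{k(k+3)}{2}.
\]
So I would try to arrange that $\pi_{\Pi_0}$ carries $\Sigma$ to a family $\Sigma'\subset\g(k-1,N-k-1)$ of pairwise-intersecting $(k-1)$-planes to which the inductive hypothesis applies.

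First I would reduce to the case in which two general members of $\Sigma$ meet in a single point: if general pairs met along a positive-dimensional subspace, one either runs the same argument after replacing the planes by these intersections (thereby lowering $k$), or shows directly that $\Sigma$ is elementary. Granting this, for general $\Pi\in\Sigma$ one has $\dim(\Pi\cap\Pi_0)=0$, so $\pi_{\Pi_0}(\Pi)$ is a $(k-1)$-plane; and for two general members $\Pi_1,\Pi_2$ the point $\Pi_1\cap\Pi_2$ does not lie on $\Pi_0$ (otherwise, fixing $\Pi_1$ and varying $\Pi_2$, all these points would lie in the single point $\Pi_1\cap\Pi_0$, forcing a base point of $\Sigma$), so $\pi_{\Pi_0}(\Pi_1\cap\Pi_2)\in\pi_{\Pi_0}(\Pi_1)\cap\pi_{\Pi_0}(\Pi_2)$ and $\Sigma':=\overline{\pi_{\Pi_0}(\Sigma)}$ is again pairwise-intersecting. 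Non-degeneracy of $\Sigma$ passes to $\Sigma'$ (a hyperplane through $V_{\Sigma'}$ pulls back to one through $V_{\Sigma}$), and $\Sigma'$ is positive-dimensional as soon as $N>2k$ -- and if $N\le 2k$ then $N\le k(k+3)/2$ already. It remains to see that elementariness of $\Sigma'$ forces elementariness of $\Sigma$: if all $\pi_{\Pi_0}(\Pi)$ pass through a common point, or meet a common subspace, then every $\Pi\in\Sigma$ meets the $\pi_{\Pi_0}$-preimage of that locus, a subspace of controlled codimension in $\p^N$, and one checks this again makes $\Sigma$ \emph{elementary}. With this in place the inductive hypothesis gives $N-k-1\le(k-1)(k+2)/2$, i.e.\ $N\le k(k+3)/2$.

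For the equality statement, assume $N=k(k+3)/2$, so that $\Sigma'$ is an extremal, non-degenerate, non-elementary pairwise-intersecting family of $(k-1)$-planes in $\p^{(k-1)(k+2)/2}$; by induction $\Sigma'$ lies in the family of spans of the rational normal curves on $v_{k-1}(\p^2)$, or in the family of tangent spaces to $v_2(\p^{k-1})$, or in a family of maximal linear subspaces of $\g(1,k)$. One then identifies $\pi_{\Pi_0}$, in each case, with a standard projection -- $v_k(\p^2)\dashrightarrow v_{k-1}(\p^2)$ induced by a linear projection $\p^2\dashrightarrow\p^1$ from a line $L_0$, with $\Pi_0=\langle v_k(L_0)\rangle$; $v_2(\p^k)\dashrightarrow v_2(\p^{k-1})$ induced by $\p^k\dashrightarrow\p^{k-1}$, with $\Pi_0$ a tangent space; $\g(1,k+1)\dashrightarrow\g(1,k)$ induced by $\p^{k+1}\dashrightarrow\p^k$, with $\Pi_0$ a maximal linear subspace -- and then lifts the conclusion: since $V_{\Sigma}$ is a non-degenerate variety in $\p^{k(k+3)/2}$ swept out by the pairwise-intersecting planes of $\Sigma$ and mapping onto $V_{\Sigma'}$, it must be the corresponding level-$k$ variety ($v_k(\p^2)$, the tangent variety of $v_2(\p^k)$, or $\g(1,k+1)$), whence the planes of $\Sigma$ are forced to lie among the distinguished $k$-planes on it.

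The hard part will be precisely this last lifting step: showing that the extension of $\Sigma'$ from $\p^{(k-1)(k+2)/2}$ back to $\p^{k(k+3)/2}$ is unique. The obstacle is controlling the ``exceptional'' planes of $\Sigma$ -- those meeting $\Pi_0$ in more than a point, or contained in $\langle\Pi_0,\Lambda\rangle$ for a small $\Lambda$ -- and ruling out any further non-elementary pairwise-intersecting family in low codimension. This is exactly where Morin's original estimates in \cite{morin2} do the work; an alternative would be to compute, for $V_{\Sigma}$, the second fundamental form in the spirit of the classification of Severi varieties, exploiting that the family $\Sigma$ itself imposes strong tangency constraints on $V_{\Sigma}$ (cf.\ the use of Zak's theorem on tangencies in the proof of Corollary \ref{cor:main}).
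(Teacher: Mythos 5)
The first thing to say is that the paper does not prove Theorem \ref{thm:morinnn} at all: it is quoted as a classical result of Morin \cite{morin2}, and the paper's contribution lies in applying it (via Proposition \ref{prop:coarseee}), not in establishing it. So your proposal is not competing with an argument in the paper; it is an attempt to reconstruct Morin's own proof, and it must be judged on its own completeness.

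Judged that way, it is a sensible strategy outline (induction on $k$ by projecting from a general $\Pi_0\in\Sigma$, which does lower the Veronese bound by exactly one step), but two of its steps are genuinely gapped. First, the transfer of non-elementariness through $\pi_{\Pi_0}$ does not work as stated: if all the $(k-1)$-planes $\pi_{\Pi_0}(\Pi)$ pass through a common point $q'\in\p^{N-k-1}$, what you can conclude upstairs is only that every $\Pi\in\Sigma$ meets the fixed $(k+1)$-plane spanned by $\Pi_0$ and $\pi_{\Pi_0}^{-1}(q')$, which for $k\geq 2$ is far weaker than any elementary condition. Moreover the notion of ``elementary'' for $k$-planes is itself recursive (see the description quoted in the proof of Proposition \ref{prop:coarseee}: $k$-planes containing a common $(k-m)$-plane, the $(k-m)$-planes again pairwise intersecting), so the phrase ``one checks this again makes $\Sigma$ elementary'' hides exactly the combinatorial analysis that constitutes the bulk of Morin's paper; without it the induction does not close, since $\Sigma'$ could be elementary while $\Sigma$ is not, and the inductive bound $N-k-1\leq(k-1)(k+2)/2$ is then unavailable. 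The preliminary reduction to $\dim(\Pi_1\cap\Pi_2)=0$ is also not well defined as written (the pairwise intersections do not form a family parametrized by $\Sigma$). Second, the equality case is the heart of the theorem and you explicitly defer it: identifying $\pi_{\Pi_0}$ with the standard projections and proving uniqueness of the lift of $\Sigma'$ back to $\p^{k(k+3)/2}$ --- including control of the exceptional planes meeting $\Pi_0$ in positive dimension, which the projection destroys --- is precisely what must be proved, and ``this is exactly where Morin's original estimates do the work'' outsources it to the reference whose theorem you are trying to prove. As it stands the proposal is a plan with the two hardest steps missing.
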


In view of Theorem \ref{thm:morinnn}, we can definitely say that the
following result was already known by Morin:

\begin{proposition}\label{prop:coarseee}
Let $X\subset\p^N$ be a (maybe singular) non-degenerate surface
swept out by a $1$-dimensional family of curves, each of them
spanning a $k$-plane, such that any two of them intersect. Then
either $V_{\Sigma}\subset\p^N$ is a cone, or $N\leq k(k+3)/2$, with
equality if and only if one of the following holds:
\begin{enumerate}
\item[(i)] $X$ is the $k$-Veronese surface $v_k(\p^2)\subset\p^{k(k+3)/2}$;

\item[(ii)] $X$ is the second symmetric product of an irreducible curve in
$\p^k$, and the embedding in $\p^{k(k+3)/2}$ is given by
$\phi_k:S^2\p^k\hookrightarrow\p^{k(k+3)/2}$ (cf. Example
\ref{ex:phi});

\item[(iii)] $X$ is given by the secant lines to an irreducible curve of
$\p^{k+1}$, embedded by the Pl\"ucker embedding of $\g(1,k+1)$ in
$\p^{k(k+3)/2}$.
\end{enumerate}
\end{proposition}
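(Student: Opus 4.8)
The plan is to read the statement off Morin's Theorem~\ref{thm:morinnn}, applied to the family $\Sigma\subset\g(k,N)$ of $k$-planes spanned by the curves of the given family, supplemented by a reconstruction of $X$ from $\Sigma$ in the spirit of Example~\ref{ex:phi}. As in Section~\ref{section:proof} we may assume $B$ and the general $C_b$ integral. Any two of the curves meet, so any two planes of $\Sigma$ meet; $\Sigma$ is non-degenerate in $\g(k,N)$ because its planes cover the non-degenerate surface $X$; and $\Sigma$ is a genuine curve unless the spanning map $b\mapsto\langle C_b\rangle$ is constant, in which case $X$ lies in a $\p^k$, $N=k\le k(k+3)/2$, and there is nothing to prove. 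The first step is to show that if $V_\Sigma$ is \emph{not} a cone then $\Sigma$ is not elementary: in each of Morin's elementary cases the planes of $\Sigma$ either all contain a fixed linear subspace or all meet a fixed linear subspace in a fixed positive dimension, and then, reproducing the analysis of Propositions~\ref{prop:L} and~\ref{prop:p} with $\g(2,N)$ replaced by $\g(k,N)$, one finds that $V_\Sigma$ is a cone after all. Granting this, Theorem~\ref{thm:morinnn} gives $N\le k(k+3)/2$, with equality only if $\Sigma$ lies in one of the three families (i)--(iii) of that theorem.

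Next I would treat the case $N=k(k+3)/2$. The observation that drives all three subcases is that any two distinct $k$-planes of the relevant Morin family meet in exactly one point, and this point moves on a distinguished locus: it is $v_k(L\cap L')$ on $v_k(\p^2)$ in case (i); it is $\phi_k(x+y)$ in case (ii); it is the point of $\g(1,k+1)$ represented by the line $\overline{pq}\subset\p^{k+1}$ in case (iii). Writing $\Pi_b=\langle C_b\rangle$ for the plane of $\Sigma$ attached to $b\in B$, the hypothesis $C_b\cap C_{b'}\ne\emptyset$ forces $C_b$ to contain the single point $\Pi_b\cap\Pi_{b'}$ for every $b'$; letting $b'$ vary, $C_b$ contains a whole curve, and since $C_b$ is integral it \emph{equals} that curve. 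Since by construction $\langle C_b\rangle=\Pi_b$ is the full $k$-plane of the Morin family, this in turn forces the curve parametrizing $\Sigma$ to be non-degenerate in the relevant projective space.

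Concretely: in case (i), $\Sigma$ is parametrized by a $1$-dimensional family of lines $\{L_b\}$ in $\p^2$, and $C_b$ contains $v_k(L_b\cap L_{b'})$ for all $b'$; if the $L_b$ all passed through a common point then $V_\Sigma$ would be a cone, so they do not, the points $L_b\cap L_{b'}$ sweep out $L_b$, hence $v_k(L_b)\subseteq C_b$ and thus $C_b=v_k(L_b)$, whence $X=\bigcup_b v_k(L_b)=v_k(\p^2)$. In case (ii), $\Sigma$ is parametrized by a curve $\Gamma\subset\p^k$ via $x\mapsto T_{v_2(x)}v_2(\p^k)$, and $C_b$ contains $\phi_k(x_b+x_{b'})$ for all $b'$, hence $\phi_k(x_b+\Gamma)\subseteq C_b$, so $C_b=\phi_k(x_b+\Gamma)$; the spanning condition forces $\Gamma$ non-degenerate in $\p^k$, and $X=\bigcup_b\phi_k(x_b+\Gamma)=\phi_k(S^2\Gamma)$, i.e.\ case (ii) of the proposition. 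In case (iii), the planes of $\Sigma$ are the pencils of lines through the points of a curve $\Gamma\subset\p^{k+1}$, and $C_b$ contains the point $[\overline{p_bp_{b'}}]$ for all $b'$, hence the image of $\Gamma$ under projection from $p_b$; the spanning condition forces this projection, and hence $\Gamma$, to be non-degenerate in $\p^{k+1}$, so $C_b$ is the curve of secant lines from $p_b$ to $\Gamma$ and $X$ is the surface of secant lines to $\Gamma$ inside $\g(1,k+1)\subset\p^{k(k+3)/2}$, which is case (iii).

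Finally, the converse follows from the constructions of Example~\ref{ex:phi} and the ensuing remark together with the families of Theorem~\ref{thm:morinnn}: each of the surfaces in (i)--(iii) is non-degenerate in $\p^{k(k+3)/2}$ and is swept out by a $1$-dimensional family of pairwise-intersecting curves, each spanning a $k$-plane, so the characterization is sharp. I expect the only delicate point to be the very first step, namely matching Morin's unspecified ``elementary exceptions'' with the cone alternative; once that is settled, everything else is a mild elaboration of Example~\ref{ex:phi}, with the non-degeneracy of the base curve falling out of the requirement that $C_b$ span its $k$-plane, and the remaining identifications being routine consequences of the explicit descriptions of $v_k$, $\phi_k$ and the Pl\"ucker embedding of $\g(1,k+1)$.
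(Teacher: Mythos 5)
Your overall strategy coincides with the paper's: feed the family of spans $\Sigma\subset\g(k,N)$ into Morin's Theorem \ref{thm:morinnn}, dispose of the elementary exceptions, and in the boundary case $N=k(k+3)/2$ recover $X$ from $\Sigma$ by observing that $C_b$ must contain the moving point $\langle C_b\rangle\cap\langle C_{b'}\rangle$. Your treatment of the three equality cases is correct and in fact more explicit than the paper's, which simply says ``argue as in Proposition \ref{prop:N=5}''; the converse direction is also fine.

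The gap is exactly where you suspected it, in the first step, and it is a genuine one. You describe Morin's elementary exceptions as ``the planes of $\Sigma$ either all contain a fixed linear subspace or all meet a fixed linear subspace in a fixed positive dimension''; that is the definition used for $k=2$ in Theorem \ref{thm:morin}, but for general $k$ the elementary families (as the paper quotes them from Morin's second article) are: (a) families with $N\leq 2k$, and (b) families in which each $k$-plane contains its \emph{own} $(k-m)$-plane, $1\leq m\leq k$, such that these $(k-m)$-planes pairwise intersect --- a recursive condition, not the containment of or incidence with a fixed subspace. Consequently your claimed dichotomy ``$V_{\Sigma}$ not a cone $\Rightarrow$ $\Sigma$ non-elementary'' is false: case (a) need not produce a cone (it is harmless only because $2k\leq k(k+3)/2$, an observation your write-up never makes), and case (b) with $m<k$ is not addressed by your argument at all. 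Moreover, the tool you invoke to close this step --- ``reproducing the analysis of Propositions \ref{prop:L} and \ref{prop:p}'' --- cannot do the job, since those propositions \emph{assume} that $V_{\Sigma}$ is a cone and then classify $X$; they do not show that an elementary family yields a cone. The paper settles case (b) by noting that either $m=k$, so all the $k$-planes pass through a fixed point and $V_{\Sigma}$ is a cone, or else $X$ is degenerate, contradicting the hypothesis; some argument establishing the correct trichotomy ``$N\leq 2k$ / cone / degenerate'' has to replace your first step before the rest of your proof can be run.
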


\begin{proof}
If any two curves of the family intersect then we obtain a family of
intersecting $k$-planes, and we can apply Theorem \ref{thm:morinnn}.
The elementary cases in Theorem \ref{thm:morinnn} are the following
(see \cite[p.~186]{morin2}): either $k$-planes contained in a $\p^N$
with $N\leq 2k$, or $k$-planes containing, respectively, a
$(k-m)$-plane with $1\leq m\leq k$ such that any two $(k-m)$-planes
intersect. In the second case, either $k=m$ and hence
$V_{\Sigma}\subset\p^N$ is a cone, or else $X\subset\p^N$ is
degenerate. Otherwise, if the family is non-elementary, we get
$N\leq k(k+3)/2$, with equality if and only if
$\Sigma\subset\g(k,N)$ is as in cases (i)-(iii) of Theorem
\ref{thm:morinnn}. Now we can argue as in Proposition
\ref{prop:N=5}.
\end{proof}

\begin{remark}
We actually obtain smooth surfaces in Proposition
\ref{prop:coarseee} if and only if either $C\subset\p^k$ is smooth
in (ii), or $C\subset\p^{k+1}$ is smooth and has no trisecant lines
in (iii).
\end{remark}

\begin{remark}\label{rem:italian}
Concerning Proposition \ref{prop:coarseee}, Morin actually stated a
result in a more general setting at the end of \cite{morin} (cf.
\cite[Proposition 0.18]{c-c-ml}):

`` In conclusion: \emph{An irreducible algebraic surface containing an $\infty^1$ algebraic system of (irreducible) algebraic curves intersecting pairwise in just one variable point:}

a) \emph{such that for a general point of the surface there pass more than two of these curves, is a rational surface and the algebraic system of curves is contained in a homaloidal net;}

b) \emph{such that for a general point of the surface there pass exactly two of these curves, is the surface obtained as the pairs of points of a general curve of the system.}"



\end{remark}

\begin{remark}
A further application of Theorem \ref{thm:morin} was given by
Beauville in the study of rank-$3$ vector bundles and theta functions
on curves of genus $3$ (see \cite{beau}). A more recent application
has been given by O'Grady in \cite{o'grady}.
\end{remark}

We would like to finish this section by showing the connection
between families of intersecting linear spaces and the theory of secant
defective varieties:

\begin{remark}
Theorem \ref{thm:morinnn} appears to be related to the classification of Scorza varieties in the particular
case of secant defect $\delta=1$ (see \cite[Ch.VI, \S2]{zak2}),
where Severi's characterization of the Veronese surface was extended
to higher dimensions. More precisely, Zak proved that if $X\subset\p^N$ is a non-degenerate manifold of dimension $n$ that can be isomorphically projected into $\p^{2n}$ then $N\leq n(n+3)/2$, and equality holds if and only if $X=v_2(\p^n)$. According to Terracini's Lemma, if $X\subset\p^N$ can be isomorphically projected into $\p^{2n}$ then the family of embedded tangent spaces to $X\subset\p^N$ intersect pairwise. So one could deduce Zak's result from Theorem \ref{thm:morinnn} after excluding the elementary cases described in the proof of Proposition \ref{prop:coarseee}.
\end{remark}

\bibliography{bibfile}
\bibliographystyle{amsplain}
\end{document}